\newtheorem{teo}{Theorem}[section]
\newtheorem{prop}[teo]{Proposition}
\newtheorem{pro}[teo]{Problem}
\newtheorem{algo}[teo]{Algorithm}
\newtheorem{asume}[teo]{Assumption}
\newtheorem{rem}[teo]{Remark}
\newcommand{\N}{\mathbb N}
\newcommand{\R}{\mathbb R}
\renewcommand{\H}{\mathcal{H}}
\newcommand{\G}{\mathcal G}
\newcommand{\HH}{{\bm{\mathcal{H}}}}
\newcommand{\id}{\textnormal{Id}}
\newcommand{\dom}{\textnormal{dom}\,}
\newcommand{\zer}{\textnormal{zer}}
\newcommand{\gra}{\textnormal{gra}\,}
\newcommand{\scal}[2]{{\left\langle{{#1}\mid{#2}}\right\rangle}}
\newcommand{\menge}[2]{\big\{{#1}~\big |~{#2}\big\}}
\newcommand{\RPP}{\ensuremath{\left]0,+\infty\right[}}
\newcommand{\RX}{\ensuremath{\left]-\infty,+\infty\right]}}
\newcommand{\sri}{\ensuremath{\text{\rm sri}\,}}
\newcommand{\prox}{\ensuremath{\text{\rm prox}\,}}
\newcommand{\weakly}{\ensuremath{\:\rightharpoonup\:}}
\numberwithin{equation}{section}
\DeclareSymbolFont{fouriersymbols}{FMS}{futm}{m}{n}
\DeclareSymbolFont{fourierlargesymbols}{FMX}{futm}{m}{n}
\DeclareMathDelimiter{\nr}{\mathord}{fouriersymbols}{152}{fourierlargesymbols}{147}
\DeclareMathDelimiter{\nr}{\mathord}{fouriersymbols}{152}{fourierlargesymbols}{147}
\DeclareMathAlphabet{\mathpzc}{OT1}{pzc}{m}{it}
\title[Relaxed and Inertial Nonlinear forward backward with momentum]{Relaxed and Inertial Nonlinear forward backward with momentum}
\author{Fernando Rold\'an$^{\dagger}$}
\author{Cristian Vega$^\ddag$}
\address{$^{\dagger}$Departamento de Ingeniería Matemática, Universidad de Concepción, Concepción, Chile. {\it 
	E-mail address:} 
	{\sf{fernandoroldan@udec.cl}}. }
\address{$^\ddag$ Instituto de Alta investigaci\'on (IAI), Universidad de Tarapac\'a,
Arica, Chile. {\it 
	E-mail address: } 
	{\sf{cristianvegacereno6@gmail.com}}. }
\begin{document}
\title{Relaxed and Inertial Nonlinear Forward-Backward with Momentum}
\maketitle
\begin{abstract}
In this article, we study inertial algorithms for numerically solving monotone inclusions involving the sum of a maximally monotone and a cocoercive operator. In particular, we analyze the convergence of inertial and relaxed versions of the nonlinear forward-backward with momentum (NFBM). We propose an inertial version including a relaxation step, and a second version considering a double-inertial step with additional momentum. By applying NFBM to specific monotone inclusions, we derive inertial and relaxed versions of algorithms such as forward-backward, forward-half-reflect-backward (FHRB), Chambolle--Pock, Condat--V\~u, among others, thereby recovering and extending previous results from the literature for solving monotone inclusions involving maximally monotone, cocoercive, monotone and Lipschitz, and linear bounded operators. We also present numerical experiments on image restoration, comparing the proposed inertial and relaxation algorithms. In particular, we compare the inertial FHRB with its non-inertial and momentum versions. Additionally, we compare the numerical convergence for larger step-sizes versus relaxation parameters and introduce a  {\it restart} strategy that incorporates larger step-sizes and inertial steps to further enhance numerical convergence.
\end{abstract}
\noindent\textbf{Keywords:} Operator splitting  · monotone operators  ·  monotone inclusion  ·  inertial methods  ·  convex optimization\\
\textbf{Mathematics Subject Classification (2020):} 47H05 · 65K05 · 65K15 · 90C25

\section{Introduction}
Several applications are modeled through monotone inclusions, such as mechanical problems 
\cite{Gabay1983,Glowinsky1975,Goldstein1964}, differential inclusions \cite{AubinHelene2009,Showalter1997}, convex 
	programming \cite{Combettes2018MP}, 
	game theory \cite{Nash13,BricenoLopez2019}, data science \cite{CombettesPesquet2021strategies}, image processing 
	\cite{BotHendrich2014TV,Briceno2011ImRe}, traffic theory 
\cite{Nets1,Fuku96,GafniBert84}, among other disciplines. For this reason, numerical algorithms for solving monotone inclusions have been extensively studied over the years \cite{BricenoDeride2023,BricenoDerideVega2022,BricenoDavis2018,Briceno2011Skew,ChambollePock2011,Combettes2004Optimization,Condat13,DavisYin2017,Eckstein1992,Lions1979SIAM,Malitsky2020SIAMJO,passty1979JMAA,Tseng2000SIAM,Vu13}. In particular, the nonlinear forward-backward with momentum (NFBM) was proposed in \cite{MorinBanertGiselsson2022} for solving monotone inclusions involving maximally monotone and cocoercive operators.  At each iteration, NFBM involves one evaluation of the cocoercive operator and one evaluation of the {\it warped resolvent}, which is induced by a Lipschitz operator (see also \cite{BuiCombettesWarped2020,Giselsson2021NFBS}), generalizing the classic forward-backward (FB) algorithm \cite{Lions1979SIAM,passty1979JMAA}. Additionally, NFBM allows for variable metrics induced by self-adjoint strongly monotone linear operators.  Consequently, by appropriately selecting the Lipschitz operator and the metric, several methods from the literature, such as forward-half-reflected-backward (FHRB) \cite{Malitsky2020SIAMJO}, Douglas--Rachford \cite{DR1956,Eckstein1992,Lions1979SIAM}, Chambolle--Pock (CP) \cite{ChambollePock2011}, Condat--V\~u (CV)\cite{Condat13,Vu13}, among others, are recovered from NFBM. However, the inertial/relaxed versions of these algorithms are not deduced from NFBM. Inertial and relaxation steps are incorporated into numerical algorithms for solving monotone inclusions in order to accelerate convergence.  The inertial step involves calculating the next iterate using the previous two iterates, while the relaxation step consists of a convex combination of the current and next iterate. For instance, it has been shown in \cite{Alves2020,AlvesMarcavilla2020,Iutzeler2019} that incorporating inertial and/or relaxation steps improves algorithm performance. In this article, we study the convergence of an inertial and relaxed version of NFBM. 

 Many articles studied inertial/relaxed versions of algorithms for solving monotone inclusions. For instance, inertial and relaxation extensions of the gradient descent algorithm have been proposed in \cite{nesterov1983,polyak1964}. Inertial proximal algorithms were studied in \cite{Alvarez2001,AttouchCabot19,AttouchPeypouquet2019,Moudafi2003}. Similarly, the Krasnosel'ski\u{\i}--Mann has been extended with inertial and relaxation steps in \cite{CortildPeypouquet2024,dong2021inertial,dong2022new,MaulenFierroPeypouquet2023}. Inertial versions of FB have been studied in \cite{Alvarez2004,AttouchCabot19,BeckTeboulle2009,LorenzInFB2015,Bot2016NA}. In addition, inertial and relaxed versions of FBF, DR, and CP have been studied in \cite{Bot2016NA,Bot2016Tsengine,BotrelaxFBF2023}, \cite{Alves2020,Bot2015AMC}, and \cite{he2022inertial,valkonen2020inertial}, respectively. 

The main contribution of this work is to incorporate inertial steps in NFBM. In particular, we propose an inertial version of NFBM including a relaxation step, and a second version considering a double-inertial step with additional momentum. For both methods, we prove the weak convergence of its iterates to a solution point of the monotone inclusion. We extend relaxed and inertial versions of algorithms such as FB, FHRB, CP, and CV. Moreover, we recover the existing conditions on the step-sizes that guarantee the weak convergence of the inertial and non-inertial versions of these methods that are available in the literature.  Finally, we present numerical experiments in image restoration for testing the proposed inertial and relaxation algorithms; in particular, we compare the inertial FHRB with its non-inertial and momentum versions. We show that for a fix step-size, the proposed algorithms speed up the convergence in terms of iterations. In the case where the step-size approach to its admissible limit, we introduce a {\it restart} strategy that incorporates both larger step-sizes and inertial steps, obtaining better results than the non-inertial version in iterations and CPU time.

This article is organized as follows: In Section \ref{sec:Pre}, we introduce the notation and mathematical background. Section \ref{sec:main} provides the convergence analysis of both inertial algorithms. In Section~\ref{sec:PC} we present inertial/relaxed algorithms deduced from our main result.  Section \ref{se:NE} is dedicated to
numerical experiments in image restoration.
Finally, the conclusions are presented in Section~\ref{se:conc}.
\section{Problem Statement and Proposed Algorithm}\label{sec:Pro}
In this article we aim to solve numerically the following problem.
\begin{pro}\label{pro:main}
	Let $\H$ be a real Hilbert space, let $A: \H \to 2^\H$ be a maximally monotone operator and let	$C:\H \to \H$ be a 
	$\mu$-cocoercive operator for $\mu \in \RPP$. The problem is to
	\begin{equation*} 
		\text{find }  x \in \H  \text{ such that } 
		0 \in (A+C)x 
	\end{equation*}
	under the hypothesis that its solution set, denoted by 
	$\bm{Z}$, is not empty.
\end{pro}
This problem can be solved by the NFBM, which for starting points $(x_0,u_{0}) \in \H^2$, iterates as follows: 
\begin{equation}
	\label{eq:algoNFB}
	(\forall n \in \N)\quad \left\lfloor
	\begin{aligned}
		&x_{n+1} = (M_{n}+A)^{-1} \left(M_{n}x_{n}-Cx_{n}+u_{n}/\gamma_{n}\right),\\
		&u_{n+1} = (\gamma_{n} M_{n} - S) x_{n+1}- (\gamma_{n} M_{n} - S) x_{n}.
	\end{aligned}
	\right.
\end{equation} 
where, for every $n \in \N$, $\gamma_{n} \in \RPP$, $M_{n} \colon\H 
\to \H$ is such that $\gamma_{n}M_{n}-S$ is $\zeta_{n}$-Lipschitz and $S\colon \H \to \H$ is a self-adjoint strongly monotone linear operator. In the case when, for every $n \in \N$, $\gamma_{n}=\gamma$, $\gamma_{n}M_{n}=\id$ and $S=\id$, the recurrence in \eqref{eq:algoNFB} corresponds to FB. Moreover, if we set $M_{n}= \id/\gamma - B$, $S = \id$, and $A=\tilde{A}+B$, where $\gamma \in \RPP$, $\tilde{A} \colon \H \to 2^\H$ is a set valued operator, $B \colon \H \to \H$ is a $\zeta$-Lipschitzian operator for $\zeta \in \RPP$ and $A$ is maximally monotone, \eqref{eq:algoNFB} reduces to
\begin{equation}
	\label{eq:algoMT}
	(\forall n \in \N)\quad \left\lfloor
	\begin{aligned}
		&x_{n+1} = J_{\gamma A} \left(x_{n}-\gamma(2Bx_{n}-Bx_{n-1}+Cx_{n}\right)).
	\end{aligned}
	\right.
\end{equation} 
Note that the recurrence in \eqref{eq:algoMT} corresponds to FHRB and $(x_{n})_{n \in \N}$ converges weakly to a zero of $(\tilde{A}+B+C)$ if $\gamma \in ]0,2\mu/(4\zeta\mu+1)[$. On the other hand, primal-dual methods generalizing CP and CV are deduced from the recurrence in \eqref{eq:algoNFB} by an adequate choice of $M_{n}$ and $S$ (see \cite[Section~6]{MorinBanertGiselsson2022}).  Therefore, despite our main problem consisting in an inclusion that involves only two operators, the recurrence in \eqref{eq:algoMT} can be adapted for finding zeros of monotone inclusions involving the sum of several operators, as in \cite[Section~6]{MorinBanertGiselsson2022}.

In this article, we study the convergence of the following inertial and relaxed version of \eqref{eq:algoNFB}.
\begin{algo}\label{algo:NFBInertial}
	In the context of Problem~\ref{pro:main}, let $(\gamma_{n})_{n \in \N}$ be a sequence in $\RPP$, let $(\alpha_{n})_{n \in \N}$ be a sequence in $[0,1]$, let $\lambda \in ]0,2[$, let $(x_0,x_{-1},u_{0}) \in \H^{3}$, and consider the sequence defined recursively by
	\begin{equation}
		\label{eq:algoNFBInertial}
		(\forall n \in \N)\quad \left\lfloor
		\begin{aligned}
			&y_{n} = x_{n}+\alpha_{n} (x_{n}-x_{n-1}),\\
			&p_{n+1} = (M_{n}+A)^{-1} \left(M_{n}y_{n}-Cy_{n}+u_{n}/\gamma_{n}\right),\\
			&u_{n+1} = (\gamma_{n} M_{n} - S) p_{n+1}- (\gamma_{n} M_{n} - S) y_{n},\\ &x_{n+1}=(1-\lambda)y_{n}+\lambda p_{n+1}.
		\end{aligned}
		\right.
	\end{equation} 
\end{algo}
Note that, in the case where $\alpha_{n}\equiv 0$ and $\lambda = 1$, \eqref{eq:algoNFBInertial} reduces to \eqref{eq:algoNFB}. Furthermore, in the same setting mentioned above, from \eqref{eq:algoNFBInertial}, we deduce an inertial and relaxed version of FHRB (see \eqref{eq:algoIMT}).
 Versions of NFBM and FHRB with additional momentum have been proposed in \cite{MorinBanertGiselsson2022} and \cite{Malitsky2020SIAMJO,Tang2022InFRB}, respectively. However, these extensions are not completely inertial and do not cover the recurrences in \eqref{eq:algoNFBInertial} and \eqref{eq:algoIMT}.  In order to recover these extensions, we provide the following algorithm that additionally incorporates momentum and a second relaxation step. The second inertial step allows for more flexibility in the choice of inertial parameters. For simplicity, we do not consider the relaxation step in this recurrence. 
\begin{algo}\label{algo:NFBDoubleInertial}
	In the context of Problem~\ref{pro:main}, let $(\alpha_{n})_{n \in \N}$,  $(\beta_{n})_{n \in \N}$, and $(\theta_{n})_{n \in \N}$ sequences in $[0,1]$, let $(x_0,x_{-1}, u_{0}) \in \H^{3}$, and consider the sequence defined recursively by
\begin{equation}
		\label{eq:algoNFBDoubleInertial}
		(\forall n \in \N) \left\lfloor
\begin{aligned}
			&y_{n} = x_{n}+\alpha_{n} (x_{n}-x_{n-1}),\\
            &z_{n} = x_{n}+\beta_{n} (x_{n}-x_{n-1}),\\
			&x_{n+1} = (M_{n}+A)^{-1} \left(M_{n}y_{n}-Cz_{n}+u_{n}/\gamma_{n}+\theta_{n} S(x_{n}-x_{n-1})/\gamma_{n}\right),\\
   &u_{n+1} = (\gamma_{n} M_{n} - S) x_{n+1}- (\gamma_{n} M_{n} - S) y_{n}.
		\end{aligned}
  \right.
	\end{equation} 
\end{algo}

\section{Notation and Preliminaries} \label{sec:Pre}
In this paper, $\H$ and $\G$ are real Hilbert spaces with scalar product $\scal{\cdot}{\cdot}$ and norm $\|\cdot \|$. We denoted by $\to$ the strong convergence and $\weakly$ the weak convergence. The identity operator is denoted by $\id$. Given a linear operator $L:\H\to\G$, we denote its adjoint by $L^* \colon \G\to\H$ and its norm by $\|L\|$. $L$ is a self-adjoint operator if $L^*=L$ and is strongly monotone if there exists $\alpha \in \RPP$ such that, for every $x\in \H$, $\scal{Lx}{x}\geq \alpha \|x\|^2$. Henceforth, $S\colon \H \to \H$ is a self-adjoint strongly monotone linear operator.  The inner product and norm induced by $S$ are denoted by $\scal{\cdot}{\cdot}_S:=\scal{S\cdot}{\cdot}$ and $\|\cdot \|_S$, respectively. Hence, for every $(x,y,z) \in \H^3$ and $\alpha \in \R$
\begin{align}
\label{eq:prodinternonorma} 2\scal{x-y}{y-z}_S &= \|x-z\|_S^2-\|x-y\|^2_S-\|y-z\|^2_S,\\
  \label{eq:normaalpha}  \|\alpha x + (1-\alpha)y\|^2_S &= \alpha\|x\|^2_S+(1-\alpha)\|y\|^2_S-\alpha(1-\alpha)\|x-y\|^2_S.
 \end{align}
Note that, there exists a self-adjoint strongly monotone linear operator $S^{1/2} \colon \H \to \H$ such that $S=S^{1/2}\circ S^{1/2}$. Therefore, the Cauchy--Schwarz inequality can be extended to the norm induced by $S$ and $S^{-1}$ in the following sense, for every $(x,u) \in 
		\H^2$
 \begin{align*}
		|\scal{x}{u}| 
  = \left|\scal{S^{-1/2}x}{S^{1/2}u}\right|
  \leq \|S^{-1/2}x\|\|S^{1/2}u\|
  = \|x\|_{S^{-1}}\|u\|_S.
 \end{align*}
	Let $T\colon \H \rightarrow 
	\H$ and $\beta \in \left]0,+\infty\right[$. The operator $T$ is 
	$\beta$-cocoercive with respect to $S$ if for every $(x,y) \in \H^2$, $\scal{x-y}{Tx-Ty}
		\geq \beta \|Tx - Ty 
		\|^2_{S^{-1}}$. The operator $T$ is $\beta$-Lipschitzian with respect to $S$ if $|Tx-Ty\|_{S^{-1}} \leq  
		\beta\|x - y \|_{S}$.
	Let $A\colon\H \rightarrow 2^{\H}$ be a set-valued operator.
	The graph of $A$ 
	is defined by 
     $\gra A = \menge{(x,u) \in \H \times \H}{u \in Ax}$
 and the set of of zeros of $A$ is given by 
 $\zer A = 	\menge{x \in \H}{0 \in Ax}$.
  The inverse of the operator $A$ is defined by
	$A^{-1} \colon u \mapsto \menge{x \in \H}{u \in Ax}$. The operator $A$ is called monotone if for all $\big((x,u),(y,v)\big) \in (\gra A)^2$, $\scal{x-y}{u-v} \geq 0$.
	Moreover, $A$ is maximally monotone if it is 
	monotone and its graph is 
	maximal in the sense of 
	inclusions among the graphs of monotone operators. 	The resolvent of a maximally monotone operator $A$ is 
	defined by $J_A:=(\id+A)^{-1}$.
    Note that $J_A$ is single valued. If $A$ is maximally monotone, then its inverse $A^{-1}$ is also a maximally monotone operator. We denote by $\Gamma_0(\H)$ the class of proper lower 
semicontinuous convex functions $f\colon\H\to\RX$. Let 
$f\in\Gamma_0(\H)$.
The Fenchel conjugate of $f$ is 
defined by $f^*\colon u\mapsto \sup_{x\in\H}(\scal{x}{u}-f(x))$ 
and we have
$f^*\in \Gamma_0(\H)$. The subdifferential of $f$ is the 
maximally monotone operator
$\partial f\colon x\mapsto \menge{u\in\H}{(\forall y\in\H)\:\: 
	f(x)+\scal{y-x}{u}\le f(y)}$, we have that
$(\partial f)^{-1}=\partial f^*$ 
and that $\zer\,\partial f$ is the set of 
minimizers of $f$, which is denoted by $\arg\min_{x\in \H}f$. 	
We denote the proximity operator of $f$ by $\prox_f=J_{\partial f}$.
For further background on monotone operators and convex analysis, 
the reader is referred to \cite{bauschkebook2017}.
\section{Inertial Nonlinear Forward-Backward with Momentum Correction}\label{sec:main}
This section is divided in two parts. First, we present the convergence analysis of Algorithm~\ref{algo:NFBInertial}. Next, we derive the convergence of  Algorithm~\ref{algo:NFBDoubleInertial}.
\subsection{Convergence of Algorithm~\ref{algo:NFBInertial}}
The following assumption allows us to guarantee the convergence of the proposed algorithm, it was introduced \cite[Assumption~2.2 \& Proposition~2.1]{MorinBanertGiselsson2022}.
\begin{asume}\label{asume:1}
    In the context of Problem~\ref{pro:main}, let $(\underline{\gamma},\overline{\gamma}) \in \RPP^2$, let $(\gamma_{n})_{n \in \N}$ be a sequence in $[\underline{\gamma},\overline{\gamma}]$, let $S\colon \H \to \H$ be a strongly monotone self-adjoint linear bounded operator, and, for every $n \in \N$,  let $M_{n}:\H 
	\to \H$ be such that $\gamma_{n}M_{n}-S$ is $\zeta_{n}$-Lipschitz with respect to $S$ for $\zeta_{n} \in [0,1-\varepsilon]$ and $\varepsilon \in ]0,1[.$
\end{asume}
The following proposition is a previous result that will be used to prove the convergence of the proposed method.
\begin{prop}
 \label{prop:descn}
  In the context of Problem~\ref{pro:main} and Assumption~\ref{asume:1}, consider the sequence $(x_{n})_{n \in \N}$ defined recursively by Algorithm~\ref{algo:NFBInertial} with initialization points $(x_0,x_{-1},u_0)\in \H^3$. Let $x \in \bm{Z}$ and, for every $n \in \N$, define
  \begin{align}
        \label{eq:defTn}
        T_{n}=&\gamma_{n}M_{n}-S,\\ \label{def:nun}
        \nu_n =& \begin{cases}
        \zeta_{n-1}, \text{ if } -T_n \text{ is monotone and } \lambda \geq 1,\\
        2\zeta_n+\zeta_{n-1}, \text{ otherwise.}
        \end{cases}\\
        \label{eq:defrho} \rho_{n}=&\left(2-\lambda-|1-\lambda|\nu_n -\frac{\gamma_{n}}{2\mu}-(1+|1-\lambda|)\zeta_{n}\right),\\ 
      \label{eq:defetan}\eta_{n} =& (1-\alpha_{n})\frac{\rho_{n}}{\lambda}-\lambda\zeta_{n-1},\\
      \label{eq:defxin}\xi_{n} =& \alpha_{n}\left(1+\alpha_{n}\right)+\alpha_{n}(1-\alpha_{n})\frac{\rho_{n}}{\lambda},\\
    \label{eq:defCn} C_{n+1}(x)=& \|x_{n+1}-x\|^{2}_{S} -\alpha_{n}\|x_{n}-x\|^2_S+2\lambda \scal{u_{n+1}}{x_{n+1}-x}\nonumber\\&+\lambda(1+|1-\lambda|)\zeta_{n}\|p_{n+1}-y_{n}\|^2_S+\xi_{n+1}\|x_{n+1}-x_{n}\|^{2}.
\end{align}
Suppose that there exists $N_0 \in \N$ such that $(\alpha_{n})_{n\geq N_0}$ is non-decreasing, and that
\begin{equation}\label{eq:desrho}
    (\forall n \geq N_0) \quad \rho_{n} \geq 0.
\end{equation}
Then, the following hold:
\begin{enumerate}
\item\label{prop:descn0} For every $n\geq N_0$, $\xi_{n}$ is a non-negative.
    \item\label{prop:descn1} For every $n \in \N$,
    \begin{equation}\label{eq:descn1} 
    C_{n+1}(x)\leq  C_{n}(x)
	-(\eta_{n}-\xi_{n+1})\|x_{n+1}-x_{n}\|^{2}.
    \end{equation}
\end{enumerate}
Moreover, suppose that there exists  $\epsilon \in \RPP$ such that 
\begin{equation}\label{eq:desetaxi}
	 (\forall n \geq N_0) \quad \eta_{n}-\xi_{n+1}\geq \epsilon
	\end{equation}
	and either $\lambda \in [1,2[$ or $\lambda \in ]0,1[$ and $(\xi_{n})_{n \geq N_0}$ is non-decreasing. Then,
\begin{enumerate}[resume]
\item\label{prop:descn2} For every $n_0 \geq  N_0$, $-\alpha_{n-1} \geq - \left( 1- \frac{\lambda\zeta_{n-1}}{1+|1-\lambda|} \right)$.    
\item\label{prop:descn3} $(C_{n}(x))_{n \geq N_0}$  is a non-negative convergent sequence.
\item\label{prop:descn4} $\sum_{n \in \N} \|x_{n+1}-x_{n}\|^2 < +\infty$.
\end{enumerate}
\end{prop}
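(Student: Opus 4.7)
The plan is to prove items \ref{prop:descn0}--\ref{prop:descn4} in turn, treating item \ref{prop:descn1} as the central calculation that drives the rest. Item \ref{prop:descn0} is immediate: from \eqref{eq:defxin}, $\xi_n$ is a non-negative combination of $\alpha_n \in [0,1]$, $1\pm\alpha_n$, $\rho_n \geq 0$ (granted by \eqref{eq:desrho}), and $1/\lambda > 0$.

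For item \ref{prop:descn1}, the starting point is that $x \in \bm{Z}$ yields $-Cx \in Ax$, while the warped-resolvent step in \eqref{eq:algoNFBInertial} gives $M_n y_n - Cy_n + u_n/\gamma_n - M_n p_{n+1} \in Ap_{n+1}$. Monotonicity of $A$ applied to these two pairs, multiplied by $\gamma_n > 0$ and rewritten via $\gamma_n M_n = S + T_n$, produces
\begin{equation*}
\langle p_{n+1}-x, y_n - p_{n+1}\rangle_S + \langle p_{n+1}-x, T_n(y_n - p_{n+1})\rangle + \gamma_n\langle p_{n+1}-x, Cx-Cy_n\rangle + \langle p_{n+1}-x, u_n\rangle \geq 0.
\end{equation*}
I would then: (i) convert the $S$-inner product into squared $S$-norms through the polarization identity \eqref{eq:prodinternonorma}; (ii) control the cocoercivity term via $\mu$-cocoercivity of $C$ and a Young split, producing the $\gamma_n/(2\mu)$ contribution inside $\rho_n$; (iii) bound the $T_n$-inner product through the $\zeta_n$-Lipschitz hypothesis on $T_n$ and the extended Cauchy--Schwarz inequality, then split by Young to generate the $(1+|1-\lambda|)\zeta_n\|p_{n+1}-y_n\|^2_S$ term together with its companions; and (iv) substitute $u_{n+1} = T_n(p_{n+1}-y_n)$ into $\langle p_{n+1}-x, u_n\rangle$ so that a telescoping pair $2\lambda\langle u_{n+1}, x_{n+1}-x\rangle - 2\lambda\langle u_n, x_n-x\rangle$ emerges, matching the $u$-contribution in $C_{n+1}(x) - C_n(x)$. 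Finally, the relaxation identity $p_{n+1} = y_n + (x_{n+1}-y_n)/\lambda$ and the inertial identity $y_n = x_n + \alpha_n(x_n - x_{n-1})$, combined with \eqref{eq:normaalpha}, recombine all squared norms into the single descent \eqref{eq:descn1}; the case split in $\nu_n$ reflects whether the anti-monotonicity of $-T_n$ --- available precisely when $-T_n$ is monotone and $\lambda \geq 1$ --- may be invoked to sharpen constants.

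Item \ref{prop:descn2} is extracted from \eqref{eq:desetaxi}: since $\eta_n \geq \xi_{n+1} + \epsilon \geq 0$, the definition \eqref{eq:defetan} forces $(1-\alpha_n)\rho_n \geq \lambda^2\zeta_{n-1}$, and a direct estimate on $\rho_n$ via \eqref{eq:defrho} together with the non-decreasing hypothesis on $(\alpha_n)_{n \geq N_0}$ delivers the stated upper bound on $\alpha_{n-1}$. For items \ref{prop:descn3} and \ref{prop:descn4}, I would first lower-bound $C_n(x)$: using $u_n = T_{n-1}(p_n - y_{n-1})$ together with the Lipschitz hypothesis, the extended Cauchy--Schwarz, and Young give
\begin{equation*}
-2\lambda\langle u_n, x_n-x\rangle \leq \lambda\zeta_{n-1}\|p_n - y_{n-1}\|^2_S + \lambda\zeta_{n-1}\|x_n-x\|^2_S,
\end{equation*}
so that substituting into \eqref{eq:defCn} leaves
\begin{equation*}
C_n(x) \geq (1-\lambda\zeta_{n-1})\|x_n-x\|^2_S - \alpha_{n-1}\|x_{n-1}-x\|^2_S + \lambda|1-\lambda|\zeta_{n-1}\|p_n-y_{n-1}\|^2_S + \xi_n\|x_n-x_{n-1}\|^2.
\end{equation*}
Using the bound on $\alpha_{n-1}$ from item \ref{prop:descn2} and iterating the descent from item \ref{prop:descn1} (exploiting the monotonicity of $(\alpha_n)$, and when $0 < \lambda < 1$ also the monotonicity of $(\xi_n)$) closes the recursion and delivers $C_n(x) \geq 0$. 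Then $(C_n(x))_{n \geq N_0}$ is non-increasing and non-negative, hence convergent, and summing the descent,
\begin{equation*}
\sum_{n \geq N_0}(\eta_n - \xi_{n+1})\|x_{n+1}-x_n\|^2 \leq C_{N_0}(x) - \lim_n C_n(x) < +\infty,
\end{equation*}
together with $\eta_n - \xi_{n+1} \geq \epsilon$, yields item \ref{prop:descn4}.

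The hard part will be the sign bookkeeping in item \ref{prop:descn1}: one must track contributions scaled by $\gamma_n$, $\lambda$, $\alpha_n$, and the $S$-norm coefficients, and recombine them so that the residual is exactly $\eta_n - \xi_{n+1}$. The case split governing $\nu_n$ --- isolating when $-T_n$ is monotone in order to sharpen the $\lambda \geq 1$ constants --- together with coordinating the relaxation and inertial rewrites so that all cross terms collapse into $\xi_n\|x_n-x_{n-1}\|^2$ is where the calculation is most delicate.
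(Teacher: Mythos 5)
Your plan follows essentially the same route as the paper: the same monotonicity inequality obtained from the resolvent step and $-Cx\in Ax$, the same term-by-term bounds (polarization for the $S$-term, cocoercivity plus Young for the $\gamma_n/(2\mu)$ contribution, Lipschitzness of $T_n$ for the $u$-terms with the case split on $\nu_n$), the same use of the relaxation and inertial convexity identities to produce $\eta_n$ and $\xi_{n+1}$, and the same telescoping-by-contradiction argument for non-negativity of $C_n(x)$. However, two of your quantitative choices would not close as written. First, in item \ref{prop:descn2} your chain ``$(1-\alpha_n)\rho_n\geq\lambda^2\zeta_{n-1}$ plus a direct estimate on $\rho_n$'' only works for $\lambda\in[1,2[$ (where $\rho_n\leq 2-\lambda\leq\lambda^2$ gives $1-\alpha_n\geq\zeta_{n-1}$). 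For $\lambda\in\left]0,1\right[$ one has $1+|1-\lambda|=2-\lambda$, so the target is $1-\alpha_n\geq\lambda\zeta_{n-1}/(2-\lambda)$, whereas $\rho_n\leq 2-\lambda$ only yields $1-\alpha_n\geq\lambda^2\zeta_{n-1}/(2-\lambda)$, which is strictly weaker since $\lambda^2<\lambda$. This is precisely where the hypothesis that $(\xi_n)_{n\geq N_0}$ is non-decreasing must enter \emph{in item \ref{prop:descn2} itself}: the paper uses $\lambda\eta_n\geq\lambda\xi_{n+1}\geq\lambda\xi_n\geq\alpha_n(1-\alpha_n)\rho_n$ to upgrade the factor to $(1-\alpha_n)^2\rho_n\geq\lambda^2\zeta_{n-1}$, then bounds $\rho_n\leq 2-\lambda\leq(2-\lambda)^2$, uses $\zeta_{n-1}\leq\sqrt{\zeta_{n-1}}$, and takes square roots. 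You invoke the $\xi$-monotonicity only later, so your $\lambda<1$ case of item \ref{prop:descn2} has a genuine gap.

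Second, your lower bound on $C_n(x)$ splits $-2\lambda\scal{u_n}{x_n-x}$ by Young with parameter $1$, leaving the coefficient $1-\lambda\zeta_{n-1}$ on $\|x_n-x\|_S^2$. For $\lambda\neq1$ this coefficient is not controlled by item \ref{prop:descn2}, which only guarantees $\alpha_{n-1}\leq 1-\lambda\zeta_{n-1}/(1+|1-\lambda|)$, a strictly larger threshold; indeed $1-\lambda\zeta_{n-1}$ can even be negative when $\lambda$ is close to $2$ and $\zeta_{n-1}$ close to $1$. Hence the inequality $\|x_n-x\|^2\leq\|x_{n-1}-x\|^2+cC_{n_1}(x)$ needed for the contradiction does not follow from your estimate. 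The Young parameter must be $1+|1-\lambda|$: this consumes exactly the term $\lambda(1+|1-\lambda|)\zeta_{n-1}\|p_n-y_{n-1}\|_S^2$ stored in $C_n(x)$ and leaves the coefficient $1-\lambda\zeta_{n-1}/(1+|1-\lambda|)$, which matches the bound on $\alpha_{n-1}$ from item \ref{prop:descn2} and is always positive. With these two repairs your argument coincides with the paper's proof.
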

 \begin{proof}
Fix $n \geq  N_0$.
\begin{enumerate} 
     \item Since $\alpha_{n} \in [0,1]$ and $\rho_{n} \geq 0$, the result follows from \eqref{eq:defxin}.
    \item   It follows from \eqref{eq:algoNFBInertial} that
    	\begin{align*}
		\gamma_{n} M_{n}y_{n}&-\gamma_{n} Cy_{n}+u_{n}-\gamma_{n} M_{n} p_{n+1}\in \gamma_{n} A p_{n+1}\nonumber\\
  &\Leftrightarrow 
  Sy_{n} + u_{n}  - (Sp_{n+1}+u_{n+1})-\gamma_{n} Cy_{n} \in \gamma_{n} A p_{n+1}.
	\end{align*}
  Let $x \in \bm{Z}$, thus, $-\gamma_{n} Cx \in \gamma_{n} A x$. Then, by monotonicity of $A$, we have 
  \begin{equation}\label{eq:monA}
         2\scal{Sy_{n} + u_{n}  - (Sp_{n+1}+u_{n+1})-\gamma_{n} Cy_{n}+\gamma_{n} Cx}{p_{n+1}-x}\geq 0.
\end{equation}
Let us bound the terms in \eqref{eq:monA}. First, \eqref{eq:prodinternonorma} and  \eqref{eq:normaalpha} yield
\begin{align}\label{eq:proofp1}
2\scal{y_{n} - p_{n+1}}{p_{n+1}-x}_S
    =& \|y_{n}-x\|_S^2-\|y_{n}-p_{n+1}\|_S^2-\|p_{n+1}-x\|_S^2\nonumber\\
    =& (1+\alpha_{n})\| x_{n}-x\|_S^{2}+\alpha_{n}(1+\alpha_{n}) \|x_{n}-x_{n-1}\|_S^{2}\nonumber\\
    &-\alpha_{n}\|x_{n-1}-x\|_S^{2}-\|y_{n}-p_{n+1}\|_S^2\nonumber\\
    &\quad -\|p_{n+1}-x\|_S^2.
\end{align}
Now, it follows from \eqref{algo:NFBInertial} that
\begin{align}\label{eq:proofp2}
       &2\scal{u_{n} -u_{n+1}}{p_{n+1}-x}\nonumber\\
       &\hspace{0.5cm}=2\scal{u_{n} -u_{n+1}}{x_{n+1}-x}+  2\scal{u_{n} -u_{n+1}}{p_{n+1}-x_{n+1}}\nonumber\\
               &\hspace{0.5cm}=2\scal{u_{n} -u_{n+1}}{x_{n+1}-x}+2(1-\lambda)\scal{u_{n} -u_{n+1}}{p_{n+1}-y_{n}}
\end{align}
\noindent Since $T_{n}$ is $\zeta_{n}$-Lipschitz, decomposing the first term, we have that
\begin{align}\label{eq:proofp22}
2&\scal{u_{n}}{x_{n+1}-x} \nonumber\\
&= 2\scal{u_n}{x_{n}-x}+2\scal{u_n}{x_{n+1}-x_{n}}\nonumber\\
&\leq  2\scal{u_n}{x_{n}-x}+2\left(\|u_{n}\|_{S^{-1}}\|{x_{n+1}-x_{n}}\|_S\right)\nonumber\\
&\leq  2\scal{u_n}{x_{n}-x}+\zeta_{n-1}\left(\|p_{n}-y_{n-1}\|^2_S+\|{x_{n+1}-x_{n}}\|^2_S\right).
\end{align}
Developing the second term in \eqref{eq:proofp2}, by \eqref{def:nun}, we obtain
\begin{align}
    2(1-\lambda)&\scal{u_{n} -u_{n+1}}{p_{n+1}-y_{n}}\nonumber
    \\
    =&2(1-\lambda)(\scal{u_n}{p_{n+1}-y_{n}}-\scal{u_{n+1}}{p_{n+1}-y_{n}})\nonumber\\ \leq& |1-\lambda|(\nu_n\|p_{n+1}-y_{n}\|_{S}^{2}+\zeta_{n-1}\|p_{n}-y_{n-1}\|_{S}^{2}).
\end{align}
Moreover, by the $\mu$-cocoercivity of $C$  we have
	\begin{align}\label{eq:proofp3}
	2&\gamma_{n}\scal{Cy_{n}-Cx}{x-p_{n+1}}\nonumber\\
    &= 
	2\gamma_{n}\scal{Cy_{n}-Cx}{x-y_{n}}+2\gamma_{n}\scal{Cy_{n}-Cx}{y_{n}-p_{n+1}}\nonumber\\
	&\leq
	-2\gamma_{n}\mu\|Cy_{n}-Cx\|^{2}_{S^{-1}}+2\gamma_{n}\mu\|Cy_{n}-Cx\|^{2}_{S^{-1}}
	+\frac{\gamma_{n}}{2\mu}\|p_{n+1}-y_{n}\|^{2}_{S}\nonumber\\
	&=\frac{\gamma_{n}}{2\mu}\|p_{n+1}-y_{n}\|^{2}_S.
\end{align}
Hence, it follows from \eqref{eq:monA}-\eqref{eq:proofp3} that
\begin{align}\label{eq:proofp4}
    &\|p_{n+1}-x\|^2_S+2\scal{u_{n+1}}{x_{n+1}-x} \nonumber\\
    &\leq(1+\alpha_{n})\|x_{n}-x\|^2_S-\alpha_{n}\|x_{n-1}-x\|^2_S+2\scal{u_n}{x_{n}-x}+\zeta_{n-1}\|p_{n}-y_{n-1}\|^2_S\nonumber\\
    &  +\alpha_{n}(1+\alpha_{n}) \|x_{n}-x_{n-1}\|^2_S - \|p_{n+1}-y_{n}\|^2_S+\frac{\gamma_{n}}{2\mu}\|p_{n+1}-y_{n}\|^2_S  \nonumber \\ 
    & + \zeta_{n-1}\|x_{n+1}-x_{n}\|^{2}_{S}+\nu_n|1-\lambda|\|p_{n+1}-y_{n}\|_{S}^{2}+|1-\lambda|\zeta_{n-1}\|p_{n}-y_{n-1}\|_{S}^{2}
\end{align}
Moreover, it follows from \eqref{eq:normaalpha} that
\begin{align}\label{eq:proofp5}
    \|x_{n+1}-x\|^{2}_{S}=&(1-\lambda)\|y_{n}-x\|_{S}^{2}+\lambda\|p_{n+1}-x\|_{S}^{2}-\lambda(1-\lambda)\|p_{n+1}-y_{n}\|_{S}^{2}\nonumber\\ =&(1-\lambda)(1+\alpha_{n})\|x_{n}-x\|_{S}^{2}-(1-\lambda)\alpha_{n}\|x_{n-1}-x\|_{S}^{2}\nonumber\\&+(1-\lambda)\alpha_{n}(1+\alpha_{n})\|x_{n}-x_{n-1}\|_{S}^{2}+\lambda\|p_{n+1}-x\|_{S}^{2}\nonumber\\&-\lambda(1-\lambda)\|p_{n+1}-y_{n}\|_{S}^{2}. 
\end{align}
Then, combining \eqref{eq:proofp4} and  \eqref{eq:proofp5}, we deduce
\begin{align}\label{eq:proofp6}
    &\|x_{n+1}-x\|^{2}_{S} -\alpha_{n}\|x_{n}-x\|^2_S+2\lambda \scal{u_{n+1}}{x_{n+1}-x} \nonumber\\
    &\leq\|x_{n}-x\|^2_S-\alpha_{n}\|x_{n-1}-x\|^2_S+2\lambda\scal{u_n}{x_{n}-x}+ \lambda\zeta_{n-1}\|x_{n+1}-x_{n}\|^{2}_{S}\nonumber\\
    &\quad  +\lambda(1+|1-\lambda|)(\zeta_{n-1}\|p_{n}-y_{n-1}\|^2_S
    -\zeta_{n}\|p_{n+1}-y_{n}\|^2_S)\nonumber\\
    &\quad+\alpha_{n}(1+\alpha_{n})\|x_{n}-x_{n-1}\|^2_S-\lambda\rho_{n}\|p_{n+1}-y_{n}\|^{2}_{S}.
\end{align}
Observe that $p_{n+1}-y_{n}=\lambda^{-1}(x_{n+1}-y_{n})$. Then, in view of \eqref{eq:normaalpha} and \eqref{eq:desrho}, the last term in \eqref{eq:proofp6} can be bound as follows: 
\begin{align}\label{eq:proofp7}
-\lambda\rho_{n}\|p_{n+1}-y_{n}\|^{2}_{S}
&=-\frac{\rho_{n}}{\lambda}\|(1-\alpha_{n})(x_{n+1}-x_{n})+\alpha_{n}(x_{n+1}-2x_{n}+x_{n-1})\|^{2}\nonumber\\
&=-\frac{\rho_{n}}{\lambda}\Big((1-\alpha_{n})\|x_{n+1}-x_{n}\|^{2}-\alpha_{n}(1-\alpha_{n})\|x_{n}-x_{n-1}\|^{2}\nonumber\\
& \hspace{1.2cm}+\alpha_{n}\| x_{n+1}-2x_{n}+x_{n-1}\|^{2}\Big)\nonumber\\
&\leq-\frac{(1-\alpha_{n})\rho_{n}}{\lambda}(\|x_{n+1}-x_{n}\|^{2} - \alpha_{n}\|x_{n}-x_{n-1}\|^{2}).
\end{align}
Combining \eqref{eq:proofp6} and \eqref{eq:proofp7} and rearranging terms, we obtain
\begin{align*}
        &\|x_{n+1}-x\|^{2}_{S} -\alpha_{n}\|x_{n}-x\|^2_S+2\lambda \scal{u_{n+1}}{x_{n+1}-x} \\
    &\leq\|x_{n}-x\|^2_S-\alpha_{n-1}\|x_{n-1}-x\|^2_S+2\lambda\scal{u_n}{x_{n}-x}+\xi_n\|x_{n}-x_{n-1}\|^2_S\\
    &+\lambda{ (1+|1-\lambda|)}(\zeta_{n-1}\|p_{n}-y_{n-1}\|^2_S-\zeta_{n}\|p_{n+1}-y_{n}\|^2_S)
    - \eta_n\|x_{n+1}-x_{n}\|^{2}_{S},
\end{align*}  
which is equivalent to \eqref{eq:descn1}.
\item First, suppose that $\lambda \in [1,2[$, we have $ \lambda^2+\lambda-2 \geq 0$, thus $\lambda^2 \geq 2- \lambda \geq \rho_{n}$, in view of \eqref{eq:defrho}. It follows from \eqref{eq:desetaxi} that
\begin{align*}
	0\leq \eta_{n} \leq \xi_{n+1}=(1-\alpha_{n})\lambda -\lambda \zeta_{n-1} \Rightarrow \alpha_{n}\leq 1-\zeta_{n-1}.
\end{align*}
Then, the non-decreasing property of $(\alpha_{n})_{n \geq N_0}$  yields
\begin{align*}
	-\alpha_{n-1}\geq-\alpha_{n}\geq -(1-\zeta_{n-1}) = -\left(1-\frac{\lambda\zeta_{n-1}}{1+|1-\lambda|}\right).
\end{align*}
Now, suppose that $\lambda \in ]0,1[$ and  that $(\xi_{n})_{n \geq N_0}$ is non-decreasing. Thus, $\lambda\eta_{n}\geq \lambda\xi_{n+1}\geq \lambda\xi_{n}\geq \alpha_{n}(1-\alpha_{n})\rho_{n}$. Hence, in view of \eqref{eq:defetan} and \eqref{eq:defrho}
\begin{align}\label{eq:desalphazeta1}
\lambda^2 \zeta_{n-1}&\leq  (1-\alpha_{n})^2\rho_{n}\leq  (1-\alpha_{n})^2(2-\lambda)\leq  (1-\alpha_{n})^2(2-\lambda)^2.
\end{align}	
Moreover, since $\zeta_{n-1}<1$, we have that $\zeta_{n-1}<\sqrt{\zeta_{n-1}}$. Then, by taking square root in \eqref{eq:desalphazeta1} and by the non-decreasing property of $(\alpha_{n})_{n \geq N_0}$ we deduce
\begin{align*}
	-\alpha_{n-1}\geq -\alpha_{n} \geq -\left(1-\frac{\lambda\zeta_{n-1}}{2-\lambda}\right)=-\left(1-\frac{\lambda\zeta_{n-1}}{1+|1-\lambda|}\right).
\end{align*}

\item Since, for every $n \geq N_0 $, $\eta_{n}-\xi_{n+1}\geq\epsilon$, in view of \eqref{eq:descn1} we conclude that $(C_{n}(x))_{n\geq N_0}$ is non-increasing. To show that it is non-negative, suppose that $C_{n_{1}}(x) < 0$ for some $n_{1} \geq N_0$. Since $(C_{n}(x))_{n\geq N_0}$ is non-increasing, for every $n\geq n_1$, we have 
$0>C_{n_{1}}(x)\geq C_{n}(x)$. Moreover, for every $n\geq n_1$,
\begin{align}
   C_{n}(x)   \geq&  \|x_{n}-x\|^{2}-\alpha_{n-1}\|x_{n-1}-x\|^2 +  \lambda{ (1+|1-\lambda|)}\zeta_{n-1}\|x_{n}-y_{n-1}\|^2\nonumber\\
    &\quad \quad  -  \lambda{ (1+|1-\lambda|)}\zeta_{n-1}\|x_{n}-y_{n-1}\|^2 -\frac{\lambda\zeta_{n-1}}{1+|1-\lambda|}\|x_{n}-x\|^2\nonumber\\\label{eq:proofp95}
    = & \left(1-\frac{\lambda\zeta_{n-1}}{1+|1-\lambda|}\right)\|x_{n}-x\|^{2}-\alpha_{n-1}\|x_{n-1}-x\|^2\\ 
    \geq & \left(1-\frac{\lambda\zeta_{n-1}}{1+|1-\lambda|}\right)(\|x_{n}-x\|^{2}-\|x_{n-1}-x\|^2),\nonumber
\end{align}
where the last inequality follows from \ref{prop:descn2}. Hence, for every $n\geq n_1$,
\begin{equation*}
\|x_{n} - x\|^2 \leq \|x_{n-1} - x\|^2 + \frac{C_{n_1}(x)}{1-\frac{\lambda\zeta_{n-1}}{1+|1-\lambda|}}\leq \|x_{n-1} - x\|^2 + C_{n_1}(x) .
\end{equation*}
Therefore, for every $n\geq n_1$,
\begin{equation*}0 \leq \|x_{n} - x\|^2 \leq \|x_{n-1} - x\|^2 + C_{n_1}(x)\leq \cdots \leq \|x_{n_1} - x\|^2 + (n - n_1)C_{n_1}(x),
\end{equation*}
which leads to a contradiction. Consequently, $(C_{n}(x))_{n \geq N_0}$ is non-negative and convergent. 
\item Since, for every $n \in \N$, $\eta_{n}-\xi_{n+1}\geq\epsilon$, the result follows from \eqref{eq:descn1} and \cite[Lemma 5.31]{bauschkebook2017}.
\end{enumerate}
\end{proof}
\begin{rem}
  \begin{enumerate}
      \item The non-decreasing assumption on $(\xi_{n})_{n \in \N}$ is  satisfied when $\alpha_n\equiv \alpha $ and $\zeta_n \equiv \zeta$. Furthermore, this assumption is required only when $\lambda \in ]0,1[$. In general, the best convergence results for relaxed algorithms are achieved when $\lambda >1$ (see, for instance, \cite[Section~5]{MaulenFierroPeypouquet2023}). 
      \item In the case where $\lambda \geq 1$ and $-T_n$ is monotone for every $n \in \N$, we have $\nu_n = \zeta_{n-1}$, which provides more flexibility in the choice of $\alpha_n$ and $\lambda$ to satisfy \eqref{eq:desrho} and \eqref{eq:desetaxi}. It is worth noting that in all the examples presented in Section~\ref{sec:PC},  $-T_n$ is monotone.
  \end{enumerate}
\end{rem}
The following theorem establishes the weak convergence of Algorithm~\ref{algo:NFBInertial} to a solution to Problem~\ref{pro:main}.
\begin{teo}\label{teo:main}
  In the context of Problem~\ref{pro:main} and Assumption~\ref{asume:1}, consider the sequence $(x_{n})_{n \in \N}$ defined recursively by Algorithm~\ref{algo:NFBInertial} with initialization points $(x_0,x_{-1}, u_{0})\in \H^{3}$. Let $(\rho_{n})_{n \in \N}$, $(\eta_{n})_{n \in \N}$, and $(\xi_{n})_{n \in \N}$, be the sequences defined in \eqref{eq:defrho}, \eqref{eq:defetan}, and \eqref{eq:defxin}, respectively.
Suppose that there exist $N_0 \in \N$ such that $(\alpha_{n})_{n \geq N_0}$ is non-decreasing and  $\epsilon \in \RPP $ such that
\begin{equation}\label{eq:desalphabeta2}
   (\forall n \geq N_0) \quad \quad \rho_{n}\geq 0 \text{ and } \eta_{n}-\xi_{n+1}\geq \epsilon.
\end{equation}
Moreover, suppose that either $\lambda \in [1,2[$ or $\lambda \in ]0,1[$ and $(\xi_{n})_{n \geq N_0}$ is non-decreasing. Then, $(x_{n})_{n \in \N}$ converges weakly to a point in $\bm{Z}$.
\end{teo}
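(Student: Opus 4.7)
The plan is to establish the two ingredients of Opial's lemma: first, convergence of $(\|x_n - x\|_S)_{n \in \N}$ for every $x \in \bm{Z}$; second, that every weak sequential cluster point of $(x_n)$ lies in $\bm{Z}$. The hypotheses of the theorem are precisely those of Proposition~\ref{prop:descn}, so I would start by invoking it to conclude that for every $x \in \bm{Z}$ the Lyapunov sequence $(C_n(x))_{n \geq N_0}$ is non-negative and converges to some $C_\infty(x) \geq 0$, while $\sum_n \|x_{n+1}-x_n\|^2 < +\infty$. From the latter I would read off the vanishing sequences $x_{n+1}-x_n \to 0$, $y_n-x_n = \alpha_n(x_n-x_{n-1}) \to 0$ and $p_{n+1}-y_n = \lambda^{-1}(x_{n+1}-y_n) \to 0$, and the $\zeta_n$-Lipschitzianity of $T_n = \gamma_n M_n - S$ combined with $u_{n+1} = T_n p_{n+1} - T_n y_n$ then yields $\|u_{n+1}\|_{S^{-1}} \to 0$.

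Next, I would extract from $\eta_n - \xi_{n+1}\geq \epsilon$ that $(\alpha_n)$ is bounded away from $1$. Dropping the non-negative contributions $\lambda\zeta_{n-1}$, $\alpha_{n+1}(1+\alpha_{n+1})$ and $\alpha_{n+1}(1-\alpha_{n+1})\rho_{n+1}/\lambda$ from $\eta_n$ and $\xi_{n+1}$ yields $(1-\alpha_n)\rho_n/\lambda \geq \epsilon$, and since $\rho_n \leq 2-\lambda$ by \eqref{eq:defrho}, I obtain $\alpha_n \leq 1-\lambda\epsilon/(2-\lambda) =: \alpha^* < 1$.

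The main step is to upgrade convergence of $(C_n(x))$ to convergence of $(\|x_n - x\|_S)$; this is the delicate part because $C_n(x)$ contains the sign-indefinite cross term $2\lambda\scal{u_n}{x_n-x}$. Writing $a_n = \|x_n - x\|^2_S$, I would first bootstrap boundedness of $(a_n)$: applying Young's inequality $2\lambda|\scal{u_n}{x_n-x}| \leq \epsilon' a_n + \lambda^2\epsilon'^{-1}\|u_n\|_{S^{-1}}^2$ in \eqref{eq:defCn} with $\epsilon' \in \left]0, 1-\alpha^*\right[$ and discarding the remaining non-negative terms gives a scalar recursion $a_n \leq (\alpha^*/(1-\epsilon'))\,a_{n-1} + K_n$ whose ratio is strictly less than $1$ and whose remainder is uniformly bounded (since $(C_n(x))$ converges and $\|u_n\| \to 0$), hence $(a_n)$ is bounded. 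With boundedness in hand, each auxiliary summand in \eqref{eq:defCn} vanishes, so $\phi_n := a_n - \alpha_{n-1}a_{n-1} \to C_\infty(x)$. Because $(\alpha_n)_{n \geq N_0}$ is non-decreasing and bounded above by $\alpha^*$, it converges to some $\alpha_\infty \in [0,\alpha^*]$, whence $a_n - \alpha_\infty a_{n-1} \to C_\infty(x)$; an Alvarez--Attouch-type analysis of this linear recurrence, using $\alpha_\infty < 1$, then yields $a_n \to C_\infty(x)/(1-\alpha_\infty)$.

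Finally, let $\bar{x}$ be a weak sequential cluster point of $(x_n)$ along a subsequence $(x_{n_k})$. By the first step, $p_{n_k+1} \weakly \bar{x}$ and $y_{n_k} \weakly \bar{x}$. Using the defining inclusion of $p_{n_k+1}$ in \eqref{eq:algoNFBInertial} and adding $Cp_{n_k+1}$ to both sides, I would write
\begin{equation*}
\gamma_{n_k}^{-1}\bigl(S(y_{n_k}-p_{n_k+1})+u_{n_k}-u_{n_k+1}\bigr) - Cy_{n_k} + Cp_{n_k+1} \in (A+C)p_{n_k+1},
\end{equation*}
and observe that the left-hand side converges strongly to $0$, since $p_{n_k+1}-y_{n_k} \to 0$, $u_{n_k}, u_{n_k+1} \to 0$, and $C$ is $\mu^{-1}$-Lipschitz (a standard consequence of $\mu$-cocoercivity). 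Weak--strong closedness of the graph of the maximally monotone operator $A+C$ then gives $0 \in (A+C)\bar{x}$, that is, $\bar{x} \in \bm{Z}$. Opial's lemma concludes the weak convergence of $(x_n)$ to a point of $\bm{Z}$. The principal obstacle is the third paragraph: the indefinite cross term $2\lambda\scal{u_n}{x_n-x}$ in $C_n(x)$ blocks any direct passage from convergence of $(C_n(x))$ to that of $(\|x_n-x\|_S)$, and one must exploit the strict bound $\alpha^* < 1$ to first bootstrap boundedness before invoking the Alvarez--Attouch lemma.
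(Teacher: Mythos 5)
Your proof is correct and follows essentially the same route as the paper: invoke Proposition~\ref{prop:descn} to get convergence of $(C_n(x))$ and summability of $\|x_{n+1}-x_n\|^2$, derive the vanishing quantities, establish boundedness and then convergence of $(\|x_n-x\|_S)$, identify weak cluster points through the inclusion $v_n\in(A+C)p_{n+1}$ with $v_n\to 0$, and conclude with Opial's lemma. The only deviation is in the execution of the middle step: where you bootstrap boundedness via a contractive scalar recursion (Young's inequality pairing the cross term against $\|x_n-x\|_S^2$ itself) and finish with an Alvarez--Attouch linear-recurrence argument, the paper instead absorbs $2\lambda\scal{u_n}{x_n-x}$ into the $\|p_n-y_{n-1}\|_S^2$ term already present in $C_n(x)$ (inequality \eqref{eq:proofp95}) together with a Young inequality in a parameter $\delta$, and then reads the limit off the identity $(1-\alpha_{n-1})\|x_n-x\|^2=C_n(x)-\cdots-\alpha_{n-1}\left(\|x_n-x\|^2-\|x_{n-1}-x\|^2\right)$ using $\|x_n-x_{n-1}\|\to 0$; both devices are valid and rest on the same bound $\alpha_n\le\alpha^*<1$.
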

 \begin{proof} Let $x\in \bm{Z}$. By Proposition~\ref{prop:descn} we conclude that $(C_{n}(x))_{n \in \N}$ is non-negative and convergent, and $\sum_{n \in \N} \|x_{n+1}-x_{n}\|^2<+\infty.$
Since, for every $n \in \N$, $y_{n} = x_{n} + \alpha_{n} (x_{n} - x_{n-1})$, $x_{n+1}-y_{n} = \lambda(p_{n+1}-y_{n})$, $u_{n+1}=T_{n}p_{n+1}-T_{n}y_{n}$, $T_{n}=\gamma_{n} M_{n}-S$ is $(1-\varepsilon)$-Lipschitz, and $C$ is $(1/\mu)$-Lipschitz, we obtain that
\begin{equation}\label{eq:proofc1}
    \|x_{n+1}-y_{n}\|\to 0, \quad  \|p_{n+1}-y_{n}\|\to 0, \quad
\|u_{n+1}\|\to 0, \text{ and } \|Cp_{n+1}-Cy_{n}\|\to 0.    
    \end{equation}
Now, let $\delta\in [\tau +\inf\alpha_{n-1}, \sup (1-\zeta_{n})-\tau]$ for $\tau \in \RPP$ small enough, thus, by applying the Young's Inequality with parameter $(\delta-\alpha_{n-1})/\alpha_{n-1}$
\begin{align}\label{eq:proof6}
    \alpha_{n-1} \|x_{n-1} - x\|^2 & = \alpha_{n-1}(\|x_{n} - x\|^2 -2\scal{x_n-x}{x_n-x_{n-1}}+\|x_n-x_{n-1}\|^2)\nonumber\\
    &\leq \delta\|x_{n} - x\|^2 + \frac{\alpha_{n-1}\delta}{\delta- \alpha_{n-1}} \|x_{n} - x_{n-1}\|^2.
\end{align}
Moreover, by \eqref{eq:proof6} and \eqref{eq:proofp95} we have that
\begin{align*}
    \left(1 - \zeta_{n-1} - \delta\right)\|x_{n} - x\|^2 \leq &\left(1 -\zeta_{n-1}\right)\|x_{n} - x\|^2 - \alpha_{n-1}\|x_{n-1} - x\|^2 \\
    & + \frac{\alpha_{n-1}\delta}{\delta- \alpha_{n-1}}\|x_{n} - x_{n-1}\|^2 \\\leq &C_{n}(x) + \frac{\alpha_{n-1}\delta}{\delta- \alpha_{n-1}}\|x_{n} - x_{n-1}\|^2.
\end{align*}
Since $(C_{n}(x))_{n \in \N}$ and $(\|x_{n} - x_{n-1}\|)_{n \in \N}$ are convergent and $\left(\frac{\alpha_{n-1}\delta}{\delta-\alpha_{n-1}}\right)_{n \in \N}$ is bounded, $(\|x_{n} - x\|)_{n \in \N}$ is also bounded. Set $M=\sup_{n \in \N} \|x_{n}-x\|$. Therefore, in view of \eqref{eq:proofc1} we conclude that
\begin{align}\label{eq:proofc2}
    |\|x_{n} - x\|^2 - \|x_{n-1} - x\|^2|=& |\|x_{n} - x\|- \|x_{n-1} - x\||(\|x_{n} - x\| + \|x_{n-1} - x\|)\nonumber\\ \leq& 2M|\|x_{n} - x\|- \|x_{n-1} - x\||\nonumber\\ \leq& 2M|\|x_{n} - x_{n-1} \||\rightarrow 0.
\end{align}
Since $(\alpha_{n})_{n \geq N_0}$ is non-decreasing and bounded, it converges. Moreover, by \eqref{eq:desetaxi} and \eqref{eq:defetan} we conclude that $(1-\alpha_{n-1}) \geq \lambda \epsilon/2>0$. Then,  for every $x \in \bm{Z}$, the convergence of $(\|x_{n}-x\|)_{n \in \N}$, is deduced by \eqref{eq:proofc1} and \eqref{eq:proofc2} noticing that
\begin{align}
     \lim_{n\to \infty}\left(1-\alpha_{n-1}\right)\|x_{n}-x\|^2=\lim_{n\to \infty}\Big(&C_{n}(x)-2\scal{u_n}{x-x_{n}} -\xi_{n}\|x_{n}-x_{n-1}\|^2\nonumber\\&-\alpha_{n-1}\left(\|x_{n}-x\|^2-\|x_{n-1}-x\|^2\right)\nonumber\\&-\lambda(1+|1-\lambda|)\zeta_{n-1}\|p_{n}-y_{n-1}\|^2_S\Big).\nonumber    
\end{align}
Note that, by \eqref{eq:algoNFBInertial}, we have 
		\begin{align}
			 M_{n}y_{n}+u_{n}/\gamma_{n}-Cy_{n}-M_{n}p_{n+1}\in Ap_{n+1}\Leftrightarrow v_{n} \in  (A+C)p_{n+1},
			\label{eq:proof7}
		\end{align}
  where 
  \begin{equation*}
      v_{n} = S(p_{n+1}-y_{n}) + (u_n -u_{n+1})/\gamma_{n}-(Cy_{n}-Cp_{n+1}).
  \end{equation*}
It follows from \eqref{eq:proofc1} that, $v_{n}\rightarrow 0$. Therefore, since $C$ is cocoercive, the operator $A+C$ 
 is maximally monotone \cite[Corollary~25.5]{bauschkebook2017} and we deduce from the weak-strong closeness of its graph \cite[Proposition~20.38]{bauschkebook2017} and \eqref{eq:proof7} that every weak cluster point of $\left(x_{n}\right)_{n\in \mathbb{N}}$ belongs to $\bm{Z}$. The result follows by applying Opial’s lemma (see \cite[Lemma 2.47]{bauschkebook2017}).
  \end{proof}
 \begin{rem}	\label{rem:partcases2} In the case when $\lambda = 1$, $\rho_{n}$, $\eta_{n}$, and  $\xi_{n}$ defined in \eqref{eq:defrho}, \eqref{eq:defetan}, and \eqref{eq:defxin}, reduce to $
 	    	\rho_{n}=1 -\frac{\gamma_{n}}{2\mu}-\zeta_{n},$  
 	    	$\eta_{n} = (1-\alpha_{n})\rho_{n}-\zeta_{n-1}$, and $
 	    	\xi_{n} = \alpha_{n}\left(1+\alpha_{n}\right)+\alpha_{n}(1-\alpha_{n})\rho_{n}$. 	Therefore, in this case, the  conditions in \eqref{eq:desalphabeta2} correspond to, for every $n \geq N_0 \in \N$,
 	\begin{equation}\label{eq:rempartcases12}
    \begin{cases}
        1-\frac{\gamma_{n}}{2\mu}-\zeta_{n}\geq 0,\\
        (1-\alpha_{n})\rho_{n}-\zeta_{n-1}-\alpha_{n+1}((1+\alpha_{n+1})+(1-\alpha_{n+1})\rho_{n+1})\geq \epsilon.
    \end{cases}
 	\end{equation}
 	In the non-inertial case ($\alpha_n  \equiv 0$), \eqref{eq:rempartcases12} reduces to, for every $n \geq N_0$, $1-\gamma_{n}/(2\mu) -\zeta_{n}-\zeta_{n-1}\geq \epsilon,$
 	which is the condition in \cite[Theorem~3.1]{MorinBanertGiselsson2022} for guaranteeing the convergence of NFBM.
\end{rem}	
\subsection{NFBM with double inertia and additional momentum}
In this subsection, we study the convergence of Algorithm~\ref{algo:NFBDoubleInertial}.
The following proposition is a preliminary step before addressing the convergence.
\begin{prop}\label{prop:descnDI}
  In the context of Problem~\ref{pro:main} and Assumption~\ref{asume:1}, consider the sequence $(x_{n})_{n \in \N}$ defined recursively by Algorithm~\ref{algo:NFBDoubleInertial} with initialization points $(x_0,x_{-1}, u_{0})\in \H^3$. Let $x \in \bm{Z}$ and, for every $n \in \N$, define
  \begin{align}
        \tilde{\alpha}_{n} &= \alpha_{n}+\theta_{n},\label{eq:defTalpha}\\
      \eta_{n} &= \left(1-\tilde{\alpha}_{n}-\frac{\gamma_{n}(1-\beta_{n})}{2\mu} -\zeta_{n}(1-\alpha_{n})-\zeta_{n-1}\right),\label{eq:defetaDI}\\
     \xi_{n} &= \left(2\tilde{\alpha}_{n}-\frac{\gamma_{n}\beta_{n}(1-\beta_{n})}{2\mu} -\zeta_{n}\alpha_{n}(1-\alpha_{n}) \right),\label{eq:defxiDI}\\ 
     C_{n+1}(x)&= \|x_{n+1}-x\|^2_S-\tilde{\alpha}_{n}\|x_{n}-x\|^2_S+2\scal{u_{n+1}}{x_{n+1}-x} \nonumber \\\label{eq:defcnDI}
     &\hspace{2cm}+\zeta_{n}\|x_{n+1}-y_{n}\|^2_S+\xi_{n+1}\|x_{n+1}-x_{n}\|^2.
\end{align}
Suppose that there exists $N_0 \in \N$  such that $(\tilde{\alpha}_{n})_{n \geq N_0}$ is non-decreasing and that
\begin{equation}\label{eq:desalphabetaDI}
    (\forall n \geq N_0) \quad \left(\tilde{\alpha}_{n}-\frac{\gamma_{n}\beta_{n}}{2\mu}-\zeta_{n}\alpha_{n} \right)\geq 0.
\end{equation}
Then, the following hold.
\begin{enumerate}
\item\label{prop:descn0DI} For every $n\geq N_0$, $\xi_{n}$ is a non-negative.
    \item\label{prop:descn1DI} For every $n \in \N$,
    \begin{equation}\label{eq:descn1DI} 
    C_{n+1}(x)\leq  C_{n}(x)
	-(\eta_{n}-\xi_{n+1})\|x_{n+1}-x_{n}\|^{2}.
    \end{equation}
  \end{enumerate}  
  Moreover, suppose that there exists  $\epsilon \in \RPP$ such that 
\begin{equation}\label{eq:desetaxi2}
	 (\forall n \geq N_0) \quad \eta_{n}-\xi_{n+1}\geq \epsilon.
	\end{equation}
    \begin{enumerate}[resume]
\item\label{prop:descn2DI} $(C_{n}(x))_{n \geq N_0}$  is a non-negative convergent sequence.
\item\label{prop:descn3DI}  $\sum_{n \in \N} \|x_{n+1}-x_{n}\|^2 < +\infty$.
\end{enumerate}
\end{prop}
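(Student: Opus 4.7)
The proof follows the same template as Proposition~\ref{prop:descn} but must track the effects of three new ingredients: the second inertial point $z_n$, the extra momentum $\theta_n S(x_n-x_{n-1})/\gamma_n$, and the bookkeeping variable $\tilde{\alpha}_n=\alpha_n+\theta_n$. I would treat the four items in order. Item~\ref{prop:descn0DI} is a direct algebraic consequence of \eqref{eq:desalphabetaDI}: since $\beta_n,\alpha_n\in[0,1]$, the assumption gives $2\tilde\alpha_n\ge \gamma_n\beta_n/\mu+2\zeta_n\alpha_n$, and using $1-\beta_n\le 2$ and $1-\alpha_n\le 2$ one obtains $\xi_n \ge \gamma_n\beta_n/(2\mu)+\zeta_n\alpha_n\ge 0$ directly from \eqref{eq:defxiDI}.

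For item~\ref{prop:descn1DI}, the plan is to rewrite the $x_{n+1}$-update as
\begin{equation*}
S(y_n-x_{n+1})+(u_n-u_{n+1})+\theta_n S(x_n-x_{n-1})-\gamma_n(Cz_n-Cx)\in \gamma_n A x_{n+1},
\end{equation*}
using $T_n=\gamma_nM_n-S$ and the definition of $u_{n+1}$, and then to pair with $-\gamma_n Cx\in \gamma_nAx$ via monotonicity of $A$. The four resulting terms are expanded as follows: the $S$-inner-product term via \eqref{eq:prodinternonorma} together with the expansion of $\|y_n-x\|_S^2$ by \eqref{eq:normaalpha} (producing $(1+\alpha_n)\|x_n-x\|_S^2-\alpha_n\|x_{n-1}-x\|_S^2+\alpha_n(1+\alpha_n)\|x_n-x_{n-1}\|_S^2$); the $u_n-u_{n+1}$ term via the telescoping idea of \eqref{eq:proofp2}--\eqref{eq:proofp22}, invoking Cauchy--Schwarz in $S/S^{-1}$ and the $\zeta_{n-1}$-Lipschitz bound $\|u_n\|_{S^{-1}}\le \zeta_{n-1}\|x_n-y_{n-1}\|_S$; the cocoercivity term exactly as in \eqref{eq:proofp3}, producing $\frac{\gamma_n}{2\mu}\|x_{n+1}-z_n\|_S^2$; and the new momentum term via the identity $2\theta_n\langle x_n-x_{n-1},x_n-x\rangle_S=\theta_n(\|x_n-x_{n-1}\|_S^2+\|x_n-x\|_S^2-\|x_{n-1}-x\|_S^2)$ plus a Young's-inequality bound on $2\theta_n\langle x_n-x_{n-1},x_{n+1}-x_n\rangle_S$. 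Expanding $\|x_{n+1}-z_n\|_S^2$ and $\|y_n-x_{n+1}\|_S^2=\|(x_{n+1}-x_n)-\alpha_n(x_n-x_{n-1})\|_S^2$ in the same way, then grouping the coefficients of $\|x_n-x\|_S^2$, $\|x_{n-1}-x\|_S^2$, $\|x_n-x_{n-1}\|^2$, and $\|x_{n+1}-x_n\|^2$, one sees that the $\alpha_n$ and $\theta_n$ contributions combine into $\tilde\alpha_n$ and produce exactly the coefficients in \eqref{eq:defetaDI}--\eqref{eq:defxiDI}, yielding \eqref{eq:descn1DI} after rearrangement.

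For items \ref{prop:descn2DI} and \ref{prop:descn3DI}, I would mirror Proposition~\ref{prop:descn}\ref{prop:descn3}--\ref{prop:descn4}. Under \eqref{eq:desetaxi2}, \eqref{eq:descn1DI} shows $(C_n(x))_{n\ge N_0}$ is non-increasing, so non-negativity is argued by contradiction: if $C_{n_1}(x)<0$ for some $n_1\ge N_0$, then using $\|u_n\|_{S^{-1}}\le \zeta_{n-1}\|x_n-y_{n-1}\|_S$ and Young's inequality to lower-bound the cross term $2\langle u_n,x_n-x\rangle$ in \eqref{eq:defcnDI}, one obtains $C_n(x)\ge (1-\zeta_{n-1})\|x_n-x\|_S^2-\tilde\alpha_{n-1}\|x_{n-1}-x\|_S^2$. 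The key observation is that $\eta_n\ge \xi_{n+1}\ge 0$ together with \eqref{eq:defetaDI} forces $\tilde\alpha_n\le 1-\zeta_{n-1}$, and the assumed monotonicity of $(\tilde\alpha_n)_{n\ge N_0}$ lifts this to $\tilde\alpha_{n-1}\le \tilde\alpha_n\le 1-\zeta_{n-1}$; consequently $C_n(x)\ge (1-\zeta_{n-1})(\|x_n-x\|_S^2-\|x_{n-1}-x\|_S^2)$, and the same telescoping argument as in Proposition~\ref{prop:descn}\ref{prop:descn3} produces a linearly decreasing upper bound for $\|x_n-x\|_S^2$, contradicting non-negativity. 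Then item \ref{prop:descn3DI} is immediate from \eqref{eq:descn1DI}, \eqref{eq:desetaxi2}, and \cite[Lemma~5.31]{bauschkebook2017}.

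The main obstacle is the computation in item~\ref{prop:descn1DI}: correctly bookkeeping three sources of inertia (from $y_n$, from $z_n$, and from the explicit momentum $\theta_n$) so that their contributions aggregate cleanly into $\tilde\alpha_n$, $\xi_n$, and $\eta_n$ as defined in \eqref{eq:defTalpha}--\eqref{eq:defxiDI}. In particular, choosing the right Young's-inequality parameter for the cross term $2\theta_n\langle x_n-x_{n-1},x_{n+1}-x_n\rangle_S$ and correctly expanding $\|x_{n+1}-z_n\|_S^2$ so that the $(1-\beta_n)$ factors appearing in $\eta_n$ and $\xi_n$ come out exactly is the delicate point; once this is done, all remaining steps are close variants of the corresponding arguments in Proposition~\ref{prop:descn}.
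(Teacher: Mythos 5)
Your overall architecture is right, and items \ref{prop:descn0DI}, \ref{prop:descn2DI} and \ref{prop:descn3DI} are handled essentially as the paper does (the paper's item \ref{prop:descn0DI} just uses $1-\beta_n\le 1$ and $1-\alpha_n\le 1$ to get $\xi_n\ge \tilde\alpha_n-\gamma_n\beta_n/(2\mu)-\zeta_n\alpha_n\ge 0$, and your derivation of $\tilde\alpha_{n-1}\le\tilde\alpha_n\le 1-\zeta_{n-1}$ followed by the telescoping contradiction is exactly the paper's route). The gap is in item \ref{prop:descn1DI}, in how you dispose of the momentum cross term. If you bound $2\theta_n\scal{x_n-x_{n-1}}{x_{n+1}-x_n}_S$ by Young's inequality in isolation, the only parameter that reproduces the coefficients $\eta_n$ and $\xi_n$ of \eqref{eq:defetaDI}--\eqref{eq:defxiDI} is the symmetric one, and that choice is equivalent to discarding $-\theta_n\|x_{n+1}-2x_n+x_{n-1}\|_S^2$ on its own. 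But the exact expansions of $-\|x_{n+1}-y_n\|_S^2$, $\tfrac{\gamma_n}{2\mu}\|x_{n+1}-z_n\|_S^2$ and $\zeta_n\|x_{n+1}-y_n\|_S^2$ leave behind a residual
\begin{equation*}
-\Bigl(\alpha_{n}-\frac{\gamma_{n}\beta_{n}}{2\mu}-\zeta_{n}\alpha_{n}\Bigr)\|x_{n+1}-2x_{n}+x_{n-1}\|_S^{2},
\end{equation*}
whose sign is controlled by $\alpha_n-\gamma_n\beta_n/(2\mu)-\zeta_n\alpha_n$, \emph{not} by the quantity $\tilde\alpha_n-\gamma_n\beta_n/(2\mu)-\zeta_n\alpha_n$ appearing in hypothesis \eqref{eq:desalphabetaDI}. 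When $\theta_n>0$ and $\beta_n>0$ this residual can be strictly positive under \eqref{eq:desalphabetaDI} (e.g.\ the FHRBSDI regime $\alpha_n=0$, $\beta_n=1$, $\theta_n>0$, where your version would require $0\ge\gamma_n/(2\mu)$), so the argument as written does not yield \eqref{eq:descn1DI}.

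The fix is the paper's device: define $\tilde y_n=y_n+\theta_n(x_n-x_{n-1})=x_n+\tilde\alpha_n(x_n-x_{n-1})$ and absorb the momentum into the inclusion, so that the single term $-\|x_{n+1}-\tilde y_n\|_S^2$ is expanded with the identity
\begin{equation*}
\|x_{n+1}-(x_{n}+\sigma(x_{n}-x_{n-1}))\|^{2}=(1-\sigma)\|x_{n+1}-x_{n}\|^{2}-\sigma(1-\sigma)\|x_{n}-x_{n-1}\|^{2}+\sigma\|x_{n+1}-2x_{n}+x_{n-1}\|^{2}
\end{equation*}
at $\sigma=\tilde\alpha_n$ (and at $\sigma=\beta_n$, $\sigma=\alpha_n$ for the other two squares). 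Then all $\|x_{n+1}-2x_n+x_{n-1}\|_S^2$ contributions aggregate into $-(\tilde\alpha_n-\gamma_n\beta_n/(2\mu)-\zeta_n\alpha_n)\|x_{n+1}-2x_n+x_{n-1}\|_S^2$, which is nonpositive precisely by \eqref{eq:desalphabetaDI}; this is the only place that hypothesis is used in item \ref{prop:descn1DI}. Equivalently, you may keep your separate treatment of the momentum term provided you retain the exact three-term identity for $2\theta_n\scal{x_n-x_{n-1}}{x_{n+1}-x_n}_S$ (rather than a Young bound) and only drop the pooled second-difference term at the very end. All your remaining coefficients ($1+\tilde\alpha_n$ on $\|x_n-x\|_S^2$, $-\tilde\alpha_n$ on $\|x_{n-1}-x\|_S^2$, $\xi_n$ and $-\eta_n$ on the increments) then come out correctly.
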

 \begin{proof}
\begin{enumerate}
    \item It follows directly from \eqref{eq:desalphabetaDI} by noticing that
    \begin{equation*}
        (\forall n \geq N_0) \quad \xi_{n} \geq \left(\tilde{\alpha}_{n}-\frac{\gamma_{n}\beta_{n}}{2\mu}-\zeta_{n}\alpha_{n} \right).
    \end{equation*}
    \item   Fix $n \geq N_0$ and define $\tilde{y}_{n}=y_{n}+\theta_{n}(x_{n}-x_{n-1})=x_{n}+\tilde{\alpha}_{n}(x_{n}-x_{n-1})$. It follows from \eqref{eq:algoNFBDoubleInertial} that
    	\begin{align*}
		\gamma_{n} M_{n}y_{n}&-\gamma_{n} Cz_{n}+u_{n}+\theta_{n} S(x_{n}-x_{n-1})-\gamma_{n} M_{n} x_{n+1}\in \gamma_{n} A x_{n+1}\nonumber\\
  &\Leftrightarrow 
  S\tilde{y}_{n} + u_{n}  - (Sx_{n+1}+u_{n+1})-\gamma_{n} Cz_{n} \in \gamma_{n} A x_{n+1}.
	\end{align*}
  Let $x \in \bm{Z}$, thus, $-\gamma_{n} Cx \in \gamma_{n} A x$. Therefore, by the monotonicity of $A$, we have 
  \begin{equation}\label{eq:monADI}
         \scal{S\tilde{y}_{n} + u_{n}  - (Sx_{n+1}+u_{n+1})-\gamma_{n} Cz_{n}+\gamma_n Cx}{x_{n+1}-x}\geq 0.
\end{equation}
Let us bound the terms in \eqref{eq:monADI}. First, note that, \eqref{eq:prodinternonorma} and \eqref{eq:normaalpha} yield
\begin{align}\label{eq:proofp1DI}
2\scal{\tilde{y}_{n} -  x_{n+1}}{x_{n+1}-x}_S
    =& \|\tilde{y}_{n}-x\|_S^2-\|\tilde{y}_{n}-x_{n+1}\|_S^2-\|x_{n+1}-x\|_S^2\nonumber\\
	=& (1+\tilde{\alpha}_{n})\| x_{n}-x\|_S^{2}+\tilde{\alpha}_{n}(1+\tilde{\alpha}_{n}) \|x_{n}-x_{n-1}\|_S^2\nonumber\\ 
  \quad \quad &-\tilde{\alpha}_{n}\|x_{n-1}-x\|^{2} - \|x_{n+1}-\tilde{y}_{n}\|_S^{2}\nonumber\\ 
  &\quad -\|x_{n+1}-x\|_S^{2}.
\end{align}
Now, since $\gamma_{n}M_{n}-S$ is $\zeta_{n}$-Lipschitz, by \eqref{algo:NFBDoubleInertial} we have that
\begin{align}\label{eq:proofp2DI}
       2&\scal{u_{n} -u_{n+1}}{x_{n+1}-x}\nonumber\\
        &=  2\scal{u_n}{x_{n}-x}-2\scal{ u_{n+1}}{x_{n+1}-x}+2\scal{u_{n}}{x_{n+1}-x_{n}}\nonumber\\
        &\leq  2\scal{u_n}{x_{n}-x}-2\scal{u_{n+1}}{x_{n+1}-x}+2(\|u_n\|_{S^{-1}}\|{x_{n+1}-x_{n}}\|_S)\nonumber\\
        &\leq  2\scal{u_n}{x_{n}-x}-2\scal{ u_{n+1}}{x_{n+1}-x}\nonumber\\
        &\quad +\zeta_{n-1}(\|x_{n}-y_{n-1}\|^2_S+\|{x_{n+1}-x_{n}}\|^2_S)
\end{align}
Moreover, by the $\mu$-cocoercivity of $C$  we have
	\begin{align}\label{eq:proofp3DI}
	2\gamma_{n}&\scal{Cz_{n}-Cx}{x-x_{n+1}}\nonumber\\
    &= 
	2\gamma_{n}\scal{Cz_{n}-Cx}{x-z_{n}}+2\gamma_{n}\scal{Cz_{n}-Cx}{z_{n}-x_{n+1}}\nonumber\\
	&\leq
	-2\gamma_{n}\mu\|Cz_{n}-Cx\|^{2}_{S^{-1}}+2\gamma_{n}\mu\|Cz_{n}-Cx\|^{2}_{S^{-1}}
	+\frac{\gamma_{n}}{2\mu}\|x_{n+1}-z_{n}\|^{2}_S\nonumber\\
	&=\frac{\gamma_{n}}{2\mu}\|x_{n+1}-z_{n}\|^{2}_S.
\end{align}
Now, from \eqref{eq:monADI}-\eqref{eq:proofp3DI} and the non-decreasing property of $(\tilde{\alpha}_{n})_{n \geq N_0}$ we have
\begin{align}\label{eq:proofp4DI}
    \|&x_{n+1}-x\|^2_S-\tilde{\alpha}_{n}\|x_{n}-x\|^2_S+2\scal{u_{n+1}}{x_{n+1}-x} +\zeta_{n}\|x_{n+1}-y_{n}\|^2_S\nonumber\\
    &\leq\|x_{n}-x\|^2_S-\tilde{\alpha}_{n-1}\|x_{n-1}-x\|^2_S+2\scal{u_n}{x_{n}-x}+\zeta_{n-1}\|x_{n}-y_{n-1}\|^2_S\nonumber\\
    &\quad  +\tilde{\alpha}_{n}(1+\tilde{\alpha}_{n}) \|x_{n}-x_{n-1}\|^2_S - \|x_{n+1}-\tilde{y}_{n}\|^2_S+\frac{\gamma_{n}}{2\mu}\|x_{n+1}-z_{n}\|^2_S\nonumber\\ 
    &\quad +\zeta_{n}\|x_{n+1}-y_{n}\|^2_S+ \zeta_{n-1}\|x_{n+1}-x_{n}\|^2_S.
\end{align}
Note that, by \eqref{eq:normaalpha} we have, for every $\sigma \in \R$, that
\begin{align*}
&\|x_{n+1}-(x_{n}+\sigma (x_{n}-x_{n-1}))\|^{2}\\
&=(1-\sigma)\|x_{n+1}-x_{n}\|^{2}-\sigma(1-\sigma)\|x_{n}-x_{n-1}\|^{2}+\sigma\| x_{n+1}-2x_{n}+x_{n-1}\|^{2}
\end{align*}
Therefore,
\begin{align}\label{eq:proofp5DI}
     - \|x_{n+1}&-\tilde{y}_{n}\|^2_S+\frac{\gamma_{n}}{2\mu}\|x_{n+1}-z_{n}\|^2_S+\zeta_{n}\|x_{n+1}-y_{n}\|^2_S\nonumber\\ 
     &= -\left(1-\tilde{\alpha}_n-\frac{\gamma_n(1-\beta_n)}{2\mu}-\zeta_n(1-\alpha_n)\right)\|x_{n+1}-x_{n}\|^{2}\nonumber\\ 
     &\quad + \left((1-\tilde{\alpha}_n)\tilde{\alpha}_n-\frac{\gamma_n(1-\beta_n)\beta_n}{2\mu}-\zeta_n(1-\alpha_n)\alpha_n\right)\|x_{n}-x_{n-1}\|^{2}\nonumber\\
     &\quad-\left(\tilde{\alpha}_{n}-\frac{\gamma_{n}\beta_{n}}{2\mu}-\zeta_{n}\alpha_{n} \right)\|x_{n+1}-2x_{n}+x_{n-1}\|^{2}.
\end{align}
Then, by replacing \eqref{eq:proofp4DI} in \eqref{eq:proofp5DI} and in view of\eqref{eq:desalphabetaDI} we obatin \eqref{eq:descn1DI}.
Finally, since $\eta_{n}\geq \xi_{n+1}\geq0$, we conclude that $-\tilde{\alpha}_{n-1}\geq-\tilde{\alpha}_{n}\geq-(1-\zeta_{n-1})$. Then, the proof of \ref{prop:descn2DI} and \ref {prop:descn3DI} are analogous to the proof of Proposition~\ref{prop:descn}.
\end{enumerate}
   \end{proof}
  The following result establishes the convergence of Algorithm~\ref{algo:NFBDoubleInertial} and its proof is analogous to the proof of Theorem~\ref{teo:main}. 
\begin{teo}\label{teo:mainDI}
  In the context of Problem~\ref{pro:main} and Assumption~\ref{asume:1}, consider the sequence $(x_{n})_{n \in \N}$ defined recursively by Algorithm~\ref{algo:NFBDoubleInertial} with initialization points $(x_0,x_{-1},u_{0})\in \H^3$. Let $(\xi_{n})_{n \in \N}$, $(\eta_{n})_{n \in \N}$, and $(\tilde{\alpha}_{n})_{n \in \N}$ be the sequences defined in \eqref{eq:defxiDI}, \eqref{eq:defetaDI}, and \eqref{eq:defTalpha}, respectively.
Suppose that there exists $N_0 \in \N$  such that $(\tilde{\alpha}_{n})_{n \geq N_0}$ is non-decreasing and that
\begin{equation}\label{eq:desalphabeta2DI}
   (\forall n \geq N_0) \quad \quad \left(\tilde{\alpha}_{n}-\frac{\gamma_{n}\beta_{n}}{2\mu}-\zeta_{n}\alpha_{n} \right)\geq 0 \text{ and } \eta_{n}-\xi_{n+1}\geq \epsilon.
\end{equation}
Then, $(x_{n})_{n \in \N}$ converges weakly to a point in $\bm{Z}$.
\end{teo}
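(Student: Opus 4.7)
The plan is to mirror the proof of Theorem~\ref{teo:main}, invoking Proposition~\ref{prop:descnDI} in place of Proposition~\ref{prop:descn} and closing via Opial's lemma. Fix $x \in \bm{Z}$. Under hypothesis~\eqref{eq:desalphabeta2DI}, Proposition~\ref{prop:descnDI} yields that $(C_n(x))_{n \geq N_0}$ is non-negative and convergent and that $\sum_{n \in \N}\|x_{n+1}-x_n\|^2 < +\infty$, hence $\|x_{n+1}-x_n\| \to 0$. Since $\alpha_n, \beta_n, \theta_n \in [0,1]$, this forces $\|y_n - x_n\| \to 0$, $\|z_n - x_n\| \to 0$, and $\|x_{n+1} - \tilde{y}_n\| \to 0$, where $\tilde{y}_n := x_n + \tilde{\alpha}_n(x_n - x_{n-1})$ as in the proof of Proposition~\ref{prop:descnDI}. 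Since $T_n = \gamma_n M_n - S$ is $(1-\varepsilon)$-Lipschitz with respect to $S$ and $u_{n+1} = T_n x_{n+1} - T_n y_n$, we obtain $\|u_{n+1}\| \to 0$; the $(1/\mu)$-Lipschitzness of $C$ inherited from cocoercivity yields $\|Cz_n - Cx_{n+1}\| \to 0$.

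Next I would prove that $(\|x_n - x\|)_{n \in \N}$ converges. Boundedness follows by the same Young-inequality trick used in Theorem~\ref{teo:main}: applying $2|\scal{u_n}{x_n - x}| \leq \zeta_{n-1}(\|x_n - y_{n-1}\|_S^2 + \|x_n - x\|_S^2)$ to absorb the inner-product term in~\eqref{eq:defcnDI} and splitting $\tilde{\alpha}_{n-1}\|x_{n-1}-x\|^2$ as in~\eqref{eq:proof6}, one bounds $(1 - \tilde{\alpha}_{n-1} - \zeta_{n-1} - \delta)\|x_n - x\|^2$ by $C_n(x)$ plus a bounded multiple of $\|x_n - x_{n-1}\|^2$; the condition $\eta_n - \xi_{n+1} \geq \epsilon$ in~\eqref{eq:desalphabeta2DI} together with~\eqref{eq:defetaDI} forces $1 - \tilde{\alpha}_n - \zeta_{n-1}$ to stay bounded below by a positive constant, so a valid $\delta > 0$ exists. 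With $M := \sup_n \|x_n - x\| < +\infty$ one gets $|\|x_n - x\|^2 - \|x_{n-1}-x\|^2| \leq 2M\,\|x_n - x_{n-1}\| \to 0$. Rewriting~\eqref{eq:defcnDI} as
\begin{align*}
(1 - \tilde{\alpha}_{n-1})\|x_n - x\|_S^2 = {} & C_n(x) - 2\scal{u_n}{x_n - x} - \xi_n\|x_n - x_{n-1}\|^2 \\
& - \tilde{\alpha}_{n-1}\bigl(\|x_n - x\|_S^2 - \|x_{n-1}-x\|_S^2\bigr) - \zeta_{n-1}\|x_n - y_{n-1}\|_S^2
\end{align*}
and passing to the limit yields the convergence of $(\|x_n - x\|)_{n \in \N}$ for every $x \in \bm{Z}$.

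Finally, the resolvent step in~\eqref{eq:algoNFBDoubleInertial}, together with the identity $\tilde{y}_n = y_n + \theta_n(x_n - x_{n-1})$ exploited in the proof of Proposition~\ref{prop:descnDI}, yields the inclusion $S\tilde{y}_n + u_n - (Sx_{n+1} + u_{n+1}) - \gamma_n Cz_n \in \gamma_n A x_{n+1}$, which rearranges to
\begin{equation*}
v_n := \frac{S(\tilde{y}_n - x_{n+1})}{\gamma_n} + \frac{u_n - u_{n+1}}{\gamma_n} - (Cz_n - Cx_{n+1}) \in (A+C)x_{n+1}.
\end{equation*}
Since $\gamma_n \geq \underline{\gamma} > 0$ and each term on the right-hand side tends to zero in norm by the first paragraph, $v_n \to 0$ strongly. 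Because $C$ is cocoercive, $A+C$ is maximally monotone by~\cite[Corollary~25.5]{bauschkebook2017}, and the weak-strong closure of its graph~\cite[Proposition~20.38]{bauschkebook2017} forces every weak cluster point of $(x_{n+1})_{n \in \N}$ to lie in $\bm{Z}$. Combined with the convergence of $(\|x_n - x\|)$ established above for every $x \in \bm{Z}$, Opial's lemma~\cite[Lemma~2.47]{bauschkebook2017} delivers the weak convergence of $(x_n)_{n \in \N}$ to a point in $\bm{Z}$. The main obstacle I anticipate is ensuring that $1 - \tilde{\alpha}_n$ stays uniformly positive: without the relaxation slack available in Theorem~\ref{teo:main}, this bound must be extracted directly from $\eta_n - \xi_{n+1} \geq \epsilon$ together with~\eqref{eq:defetaDI}, playing here the role of part~\ref{prop:descn2} of Proposition~\ref{prop:descn}.
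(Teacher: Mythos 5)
Your proposal is correct and follows essentially the same route as the paper: invoke Proposition~\ref{prop:descnDI} for the Fej\'er-type quantities, derive the vanishing inclusion $v_n\in(A+C)x_{n+1}$, and close with weak--strong graph closedness and Opial's lemma; the paper's own proof simply compresses the convergence of $(\|x_n-x\|)_{n\in\N}$ into ``proceeding similarly to Theorem~\ref{teo:main}'', whereas you spell out the Young-inequality boundedness step and correctly extract the uniform lower bound on $1-\tilde{\alpha}_n$ from $\eta_n-\xi_{n+1}\geq\epsilon$. (Your $v_n$ with the factor $1/\gamma_n$ on $S(\tilde{y}_n-x_{n+1})$ is in fact the accurate form of the inclusion; since $\gamma_n\in[\underline{\gamma},\overline{\gamma}]$ this makes no difference to the limit.)
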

 \begin{proof} Let $x\in \bm{Z}$. In view of \eqref{eq:algoNFBDoubleInertial}, we have 
\begin{align}\label{eq:proofc1D1}
	&M_{n}y_{n}-M_nx_{n+1}+u_{n}/\gamma_{n}+\theta_{n} S(x_{n}-x_{n-1})/\gamma_{n}-Cz_{n}\in Ax_{n+1}\nonumber\\
    &\Leftrightarrow S(\tilde{y}_{n}-x_{n+1}) + (u_n -u_{n+1})/\gamma_{n}-(Cz_{n}-Cx_{n+1}) \in  (A+C)x_{n+1}.
\end{align}
By Proposition~\ref{prop:descnDI}.\ref{prop:descn3DI}, we have $\sum_{n \in \N} \|x_{n+1}-x_{n}\|^2<+\infty$. Then, since, for every $n \in \N$, $\tilde{y}_{n} = x_{n} + \tilde{\alpha}_{n} (x_{n} - x_{n-1})$, $z_{n} = x_{n} + \beta_{n} (x_{n} - x_{n-1})$, $\gamma_{n} M_{n}-S$ is $(1-\varepsilon)$-Lipschitz, and $C$ is $(1/\mu)$-Lipschitz, we conclude that
\begin{equation}\label{eq:proofc1D2}
    \|\tilde{y}_{n}-x_{n+1}\|\to 0, \quad  \|x_{n+1}-y_{n}\|\to 0, \quad
\|u_{n}\|\to 0, \text{ and } \|Cx_{n+1}-Cz_{n}\|\to 0.   
    \end{equation}
Therefore, by \eqref{eq:proofc1D1} and \eqref{eq:proofc1D2}, we deduce that every weak cluster point of $\left(x_{n}\right)_{n\in \mathbb{N}}$ belongs to $\bm{Z}$. Moreover, by Proposition~\ref{prop:descnDI}.\ref{prop:descn2DI}  we conclude that $(C_{n}(x))_{n \in \N}$ is non-negative and convergent. Consequently, by proceeding similarly to the proof of Theorem~\ref{teo:main}, we deduce that $(\|x_{n}-x\|)_{n \in \N}$ is convergent and the weak convergence follows from Opial’s lemma. \end{proof}
\begin{rem}\label{rem:partcasesDI}
	 In the case when, for every $n \in \N$, $\alpha_{n}=\beta_{n}=0$, \eqref{eq:algoNFBDoubleInertial} reduces to
    \begin{equation*}
		(\forall n \in \N) \left\lfloor
\begin{aligned}
			&x_{n+1} = (M_{n}+A)^{-1} \left(M_{n}x_{n}-Cx_{n}+u_{n}/\gamma_{n}+\theta_{n}S(x_{n}-x_{n-1})/\gamma_n\right),\\
   &u_{n+1} = (\gamma_{n} M_{n} - S) x_{n+1}- (\gamma_{n} M_{n} - S) x_{n},\end{aligned}
  \right.
	\end{equation*} 
 which is the momentum version of NFBM proposed in \cite[Algorithm~2]{MorinBanertGiselsson2022}. In this case,  \eqref{eq:desalphabeta2DI} reduces to,
     \begin{equation*}
		(\forall n \geq N_0)\quad  1-\theta_{n}-2\theta_{n+1}-\frac{\gamma_{n}}{2\mu}-\zeta_{n}-\zeta_{n-1} \geq \varepsilon,
	\end{equation*} 
 which corresponds with the condition in \cite[Corollary~4.1]{MorinBanertGiselsson2022}.
\end{rem}
\section{Particular Cases of the Inertial NFBM}\label{sec:PC}
In this section, we describe particular instances of Algorithm~\ref{algo:NFBInertial} and Algorithm~\ref{algo:NFBDoubleInertial} obtaining inertial versions of existing methods in the literature. This section is developed in the context of Problem~\ref{pro:main} and Assumption~\ref{asume:1}.
\subsection{Forward-Backward} In the particular case when $S=\id$ and, for all $n \in \N$, $\gamma_n =\gamma\in \RPP$ and  $\gamma M_{n} = \id$, Algorithm~\ref{algo:NFBInertial} can be written as
\begin{equation*}
		(\forall n \in \N)\quad \left\lfloor
\begin{aligned}
			&y_{n} = x_{n}+\alpha_{n} (x_{n}-x_{n-1}),\\
			&p_{n+1} = J_{\gamma A} \left(y_{n}-\gamma Cy_{n} \right),\\
			&x_{n+1} = (1-\lambda) y_{n} + \lambda p_{n+1}, 
		\end{aligned}
  \right.
	\end{equation*} 
which corresponds to FB with inertia and relaxation step.  In this case, since $\zeta_{n} \equiv 0$ and assuming $\alpha_{n}\nearrow \alpha$, in view of Theorem~\ref{teo:main}, the convergence of $(x_{n})_{n \in \N}$ to a $x \in \zer (A+C)$ is guaranteed if
\begin{equation*}
	\frac{(1-\alpha)^2}{\lambda}\left(2-\lambda-\frac{\gamma}{2\mu}\right)-\alpha(1+\alpha) >0
\end{equation*}
which corresponds with the condition in \cite[Corollary~3.12]{AttouchCabot19}. Furthermore, if $\lambda = 1$, it reduces to
\begin{equation}\label{eq:desaFB}
	1-3\alpha-\frac{\gamma(1-\alpha)^2}{2\mu} >0,
\end{equation}
which is the condition proposed in \cite{LorenzInFB2015}.
 In this same setting  Algorithm~\ref{algo:NFBDoubleInertial} iterates
\begin{equation*}
	(\forall n \in \N)\quad \left\lfloor
	\begin{aligned}
		&y_{n} = x_{n}+\alpha_n (x_{n}-x_{n-1}),\\
		&z_{n} = x_{n}+\beta_n (x_{n}-x_{n-1}),\\
		&x_{n+1} = J_{\gamma A} \left(y_{n}-\gamma Cz_{n} \right).
	\end{aligned}
	\right.
\end{equation*} 
Moreover, if we assume that $\alpha_{n}\nearrow\alpha$ and $\beta_{n} \to\beta$, in view of \eqref{eq:desalphabeta2DI}, the convergence of this algorithm is guaranteed if
\begin{equation}\label{eq:desabFB}
	\alpha - \frac{\gamma\beta}{2\mu} \geq  0 \textnormal{ and }  1-3\alpha-\frac{\gamma}{2\mu}(1-\beta)^2 > 0.
\end{equation}
Note that \eqref{eq:desabFB} restricts $\alpha$ to the interval $]0,1/3[$, just as \eqref{eq:desaFB} does; however, $\beta$ is not constrained to this range, thus, \eqref{eq:desabFB} allows more flexibility in the choice of $(\alpha,\beta)$ than \eqref{eq:desaFB}. 
\subsection{Forward-Half-Reflected-Backward} Let $B\colon \H \to \H$ be a $\zeta$-Lipschitz operator for $\zeta \in \RPP$ and $\tilde{A}\colon \H \to 2^\H$ be a set-valued operator such that $\tilde{A}+B$ is maximally monotone. The problem is to, 
\begin{equation} \label{eq:probFHRB}
		\text{find }  x \in \H  \text{ such that } 
		0 \in (\tilde{A}+B+C)x.
	\end{equation}
We assume that the solution set is not empty. This problem can be solved, for example, by the methods proposed in \cite{BricenoDavis2018,Malitsky2020SIAMJO}. Set $A=\tilde{A}+B$, $S=\id$, and, for every $n \in \N$, $\gamma_{n}M_{n}=\id-\gamma B$, for $\gamma \in \RPP$. In this setting, \eqref{eq:algoNFBInertial} is written as follows. 
\begin{equation}\label{eq:algoIMT}
		(\forall n \in \N)\quad \left\lfloor
\begin{aligned}
			&y_{n} = x_{n}+\alpha_n (x_{n}-x_{n-1}),\\
			&p_{n+1} = J_{\gamma \tilde{A}} \left(y_{n}-\gamma (Bx_{n} + Cy_{n})-\gamma(By_{n}-By_{n-1})\right),\\
   &x_{n+1} = (1-\lambda)y_{n}+\lambda p_{n+1}.
		\end{aligned}
  \right.
	\end{equation} 
Note that, for every $n \in \N$, $\gamma_{n}M_{n}-S$ is $(\zeta\gamma)$-Lipschitz. Then, if $\alpha_{n}\nearrow \alpha \in \RPP$, according to Theorem~\ref{teo:main}, if
\begin{equation}\label{eq:condIMT1}
(1-\alpha)^2\left(2-\lambda -(1+2|1-\lambda|)\zeta\gamma -\frac{\gamma}{2\mu}\right)-\lambda^2\zeta\gamma -\lambda\alpha(1+\alpha)>0,
\end{equation}
$(x_{n})_{n \in \N}$, generated by \eqref{eq:algoIMT}, converges weakly to some solution to \eqref{eq:probFHRB}. The recurrence in \eqref{eq:algoIMT} differs from the algorithm proposed in \cite[Theorem~4.3]{Malitsky2020SIAMJO} which is limited to $\lambda < 1$. 
\begin{rem} In the case when $\lambda = 1$, by defining $\kappa = \zeta+1/2\mu$ and $\varphi \colon \alpha \to (1-3\alpha)/(\zeta+\kappa(1-\alpha)^2)$, 
\eqref{eq:condIMT1} reduces to
\begin{equation*}
1-\zeta\gamma-\widetilde{\gamma}-(3-2\widetilde{\gamma})\alpha-\widetilde{\gamma}\alpha^2>0\Leftrightarrow 
\varphi(\alpha)>\gamma.
\end{equation*}
By noticing that $\varphi$ is decreasing in $[0,1]$, we conclude that while $\alpha$ increases, $\gamma$ decreases, and conversely.
\end{rem}
In the same setting,  Algorithm~\ref{algo:NFBDoubleInertial} iterates as follows.
\begin{equation}\label{eq:algoDIMT}
	(\forall n \in \N)\left\lfloor
	\begin{aligned}
		&y_{n} = x_{n}+\alpha_n (x_{n}-x_{n-1}),\\
		&z_{n} = x_{n}+\beta_n (x_{n}-x_{n-1}),\\
		&x_{n+1} = J_{\gamma \tilde{A}} \left(y_{n}-\gamma (Bx_{n} + Cz_{n}+By_{n}-By_{n-1})+\theta_n (x_{n}-x_{n-1})\right).
	\end{aligned}
	\right.
\end{equation} 
In view of Theorem~\ref{teo:mainDI},  if  $\alpha_{n}\nearrow \alpha \in \RPP$, $\beta_{n} \to \beta \in \RPP$, $\theta_{n} \nearrow \theta \in \RPP$, $(x_{n})_{n \in \N}$, generated by \eqref{eq:algoDIMT}, converges weakly to a solution to \eqref{eq:probFHRB} if
\begin{equation}\label{eq:condDIMT1}
	1-3(\alpha+\theta)-\frac{\gamma(1-\beta)^2}{2\mu}-\gamma\zeta-\gamma\zeta(1-\alpha)^2 >0 \textnormal{ and } \alpha+\theta-\frac{\gamma\beta}{2\mu}-\zeta\gamma\alpha>0.
\end{equation}
Note that, if $\alpha=\beta=0$, \eqref{eq:condDIMT1} reduces to $1-3\theta-\gamma/(2\mu) -2\zeta\gamma > 0$
which corresponds with the condition proposed in \cite[Theorem~4.3]{Malitsky2020SIAMJO} and in \cite[Theorem~3.4]{Tang2022InFRB}. Note that the inertial versions of FRB and FHRB proposed in \cite{Malitsky2020SIAMJO,Tang2022InFRB} considers only the momentum term $\theta(x_{n}-x_{n-1})$ on its iterations. On the other hand, the algorithm in \eqref{eq:algoIMT} includes two inertial steps which are evaluated in the terms $Cz_{n}$ and $y_{n} -\gamma_{n-1}(By_{n}-By_{n-1})$, giving more flexibility in the method implementation.
\subsection{Primal-Dual with Block-Triangular Resolvent}\label{sec:PDBT} Let $\G$ be a real Hilbert space,  let $A_{1}\colon \H \to 2^\H$ and $A_2 \colon \G \to 2^\G$ be maximally monotone operators, let $B\colon \H \to \H$ be a monotone and $\zeta$-Lipschitz operator for $\zeta \in \RPP$, let $\tilde{C}$ be a $\mu$-cocoercive operator for $\mu \in \RPP$, and let $L \colon \H \to \G$ be a linear bounded operator. In this case, the problem is to
		\begin{equation}\label{eq:probPD}
			\text{find }  (x,u) \in \H \times \G \text{ such that } 
			\begin{cases}
				0 &\in (A_1+B+\tilde{C})x+L^*u\\
				0 &\in A_2^{-1}u-Lx.
			\end{cases} 
		\end{equation}
		under the hypothesis that its solution set is not empty.
This problem and particular instances of it, have been studied, for example, in \cite{BricenoDavis2018,ChambollePock2011,Condat13,MorinBanertGiselsson2022,Roldan4ops,Vu13}. Let $(\sigma,\tau)\in \RPP^2$, set $\HH=\H\times \G$, and consider the following operators.
\begin{equation*}
\begin{aligned}
    &A \colon \HH \to 2^\HH \colon (x,u) \mapsto ((A_1+B)x+L^*u)\times (A_2^{-1} u -Lx),\\
    &C \colon \HH \to \HH \colon (x,u) \to (\tilde{C}x,0),\\
    &S \colon \HH \to \HH \colon (x,u) \to (x-\tau L^*u,\tau u/\sigma-\tau Lx),\\
    &M \colon \HH \to \HH \colon (x,u) \to (x/\tau-Bx-L^*u,u/\sigma+Lx).
\end{aligned}
\end{equation*}
Additionally, for every $n \in \N$, set $\alpha_{n}=\alpha \in \RPP$ and define $\gamma_{n}=\tau$ and $M_{n} = M$. Hence, in this setting, by proceeding similar to \cite[Section~6.1]{MorinBanertGiselsson2022},  for initialization points $y_{-1}\in\H,$ $(x_0,v_0) \in \HH,$ and  $(x_{-1},v_{-1}) \in \HH$, Algorithm~\ref{algo:NFBInertial} iterates as follows
\begin{equation}
	\label{eq:algoPD1Inertial}
	(\forall n \in \N)\quad \left\lfloor
	\begin{aligned}
		&(y_{n},w_n) = (x_{n},v_n)+\alpha (x_{n}-x_{n-1},v_n-v_{n-1}),\\
		&p_{n+1} = J_{\tau A_1} \left(y_{n}-\tau L^*w_{n} -\tau(Bx_{n}+By_{n}-By_{n-1}+\tilde{C}y_{n})\right),\\
		&q_{n+1} = J_{\sigma A_2^{-1}}(w_{n}+\sigma L(2x_{n+1}-y_{n})),\\
		&(x_{n+1},v_{n+1})=(1-\lambda)(x_{n},v_{n})+\lambda (p_{n+1},q_{n+1}),		
		\end{aligned}
	\right.
\end{equation} 
which corresponds to a relaxed and inertial version of the 
Primal-Dual with Block-Triangular Resolvent (PDBTR) algorithm proposed in \cite[Section~6.1
]{MorinBanertGiselsson2022}.
According to \cite[Proof of Corollary~6.1]{MorinBanertGiselsson2022}, if $\kappa:=1-\sigma\tau \|L\|^2 > 0$, $A$ is maximally monotone, $S$ is linear self-adjoint and strongly monotone, $C$ is $(\mu\kappa)^{-1}$-cocoercive with respect to $S$,  and $\tau M- S$ is $(\tau\zeta/\kappa)$-Lipschitz with respect to $S$.  Therefore, the condition in \eqref{eq:desalphabeta2} reduces to
\begin{equation}\label{eq:condDIPD1}
	(1-\alpha)^2\left(2-\lambda -\frac{(1+2|1-\lambda|)\tau\zeta}{\kappa} -\frac{\tau}{2\mu\kappa}\right)-\frac{\lambda^2\tau\zeta}{\kappa} -\lambda\alpha(1+\alpha)>0.
\end{equation}
Then, if \eqref{eq:condDIPD1} holds, $(x_{n},v_{n})_{n \in \N}$ converges weakly to a solution to \eqref{eq:probPD}.  When $B=0$, \eqref{eq:algoPD1Inertial} reduces to the inertial and relaxed version of CV and of CP if $C=0$ \cite{ChambollePock2011,Condat13,MaulenFierroPeypouquet2023,Vu13}. In the case when $\alpha =0$ and $\lambda = 1$, \eqref{eq:condDIPD1} reduces to the condition in \cite[Corollory~6.1]{MorinBanertGiselsson2022} guaranteeing the convergence of PDBTR.
 \section{Numerical Experiments}\label{se:NE}
  In this section, we present a series of numerical experiments\footnote{All numerical experiments were implemented in MATLAB on a laptop equipped with an AMD Ryzen 5 3550Hz processor, Radeon Vega Mobile Gfx, and 32 GB of RAM. The code is available at this \href{https://github.com/cristianvega1995/Relaxed-and-Inertial-Nonlinear-Forward-Backward-with-Momentum}{repository}.} on image restoration to evaluate the efficiency of the inertial variants of FHRB. In particular, we test the performance of our method under different inertia and momentum parameter settings. Additionally, we compare the results with the standard FHRB method (without inertia) and its momentum-based variant proposed in \cite{Malitsky2020SIAMJO,Tang2022InFRB}. The problem formulation is described first, followed by the presentation and discussion of the numerical results.
 \subsection{Numerical experiments on image restoration}
Let $n \in \N$ and $x^{*} \in \mathcal{C}:=[0,255]^{N\times N}$ represent an image of $N\times N$ pixels in the range $[0,1]$. The goal is to recover the original image from a blurry and noisy observation $b = Kx^{*} + \epsilon$,
where $K \colon \mathbb{R}^{N \times N} \to \mathbb{R}^{N \times N}$ is a linear, bounded operator modelling a blur process and $\epsilon$ is a random additive noise. We assume that $x^{*}$ can be well-approximated by solving the following optimization problem.
\begin{align}\label{Min:image} \min_{x\in \mathcal{C}} & \frac{1}{2}\|Kx - b\|^{2} + \rho \|Dx\|_{1}, \end{align}
where $\rho > 0$ is a regularization parameter, $\|\cdot\|_{1}$ denotes the $\ell_{1}$ norm, and $D\colon x \mapsto (D_{1}x, D_{2}x)$ is the discrete gradient with $D_1$ and $D_2$ representing the horizontal and vertical differences with Neumann boundary conditions, respectively. By setting $f = \iota_{\mathcal{C}}$, $ h = \frac{1}{2}\|K\cdot - b\|^{2}$, and  $g = \lambda \|\cdot\|_{1}$, we can note that $0 \in \sri(\dom g - D (\dom f))$, thus, in view of \cite[Theorem 16.3 \& Theorem~16.47]{bauschkebook2017}, the optimization problem \eqref{Min:image} is equivalent to
\begin{equation}\label{eq:conopt}
    \text{find } x^{*} \in \H \text{ such that } 
    0 \in \partial f(x^{*}) + D^{*}\partial g(Dx^{*})   + \nabla h(x^{*}),
\end{equation}
which, together with its dual problem, is a particular instance of \eqref{eq:probFHRB} and can be solved using the algorithms \eqref{eq:algoIMT} and \eqref{eq:algoDIMT}.
 Indeed, by taking $u^* \in \partial g (Dx^*)$, from \eqref{eq:conopt}, we have $(0,0)^\top \in (\tilde{A}+B+C)(x^*,u^*)^\top$
 where $\tilde{A}\colon (x,u)\to \partial f (x) \times \partial g^*(u)$, $B \colon (x,u) \to (D^*u,-Dx)$, and $C\colon (x,u) \to (\nabla h (x), 0)$.
Since $f \in \Gamma_0(\H)$ and $g \in \Gamma_0(\G)$ 
by \cite[Proposition~20.22 \& Proposition~20.23]{bauschkebook2017}, $A$ is maximally monotone. Moreover, since $B$ is skew, by \cite[Proposition 2.7]{Briceno2011Skew}, $B$ is $\|D\|$-Lipschitz and $\tilde{A}+B$ is maximally monotone.  Moreover, by \cite[Theorem 5.1]{Tang2022InFRB}, $C$ is $(1/|K\|^{2})$-cococercive. Therefore, we can apply \eqref{eq:algoIMT} and \eqref{eq:algoDIMT} to solve the problem in \eqref{Min:image}. In this context, Algorithm~\ref{algo:NFBInertial} and Algorithm~\ref{algo:NFBDoubleInertial} are written in \eqref{eq:algoDIMT-functions} in a general framework. We compare the algorithms proposed in Table~\ref{Tab:IP} and Table~\ref{Tab:RP} that are particular instances of the recurrence in \eqref{eq:algoDIMT-functions}.
\begin{equation}\label{eq:algoDIMT-functions}
	(\forall n \in \N)\left\lfloor
	\begin{aligned}
		&(y_{n}^{1},y_n^2) = (x_{n}^{1},x_{n}^{2})+\alpha (x_{n}^{1}-x_{n-1}^{1},x_{n}^{2}-x_{n-1}^{2}),\\
		&z_{n}^{1} = x_{n}^{1}+\beta(x_{n}^{1}-x_{n-1}^{1}),\\
            &(w_{n}^{1},w_n^2) = (x_{n}^{1},x_{n}^{2})+\theta (x_{n}^{1}-x_{n-1}^{1},x_{n}^{2}-x_{n-1}^{2}),\\
		&p_{n+1}^{1} = \prox_{\gamma f} \left(w^{1}_{n}-\gamma (D^{*}(x_{n}^{2}+y_n^2-y_{n-1}^2) + \nabla h(z_{n}^{1}))\right),\\
		&p_{n+1}^{2} = \prox_{\gamma g^{*}} \left(w_{n}^{2}-\gamma D(x_{n}^{1}+y_{n}^{1}-y_{n-1}^{1})\right),\\ &(x_{n+1}^{1},x_{n+1}^{2}) = (1-\lambda)(y_{n}^{1},y_{n}^{2})+\lambda (p_{n+1}^{1}, p_{n+1}^{2}).
	\end{aligned}
	\right.
\end{equation} 
The explicit formula of $\prox_{\gamma f}$ can be found in  \cite[Example 23.4 \& Proposition 29.3]{bauschkebook2017}. While the explicit formula of $\prox_{\gamma g^*}$ can be found in  \cite[Proposition 24.8 (ix) \& Example 24.11]{bauschkebook2017}.
\subsection{Numerical results}
In a first instance, we consider $N=256$ and the original image $x^{*}$ shown in Figure~\ref{fig:x256}.  The operator $K$ corresponds to an average blur kernel of $3\times 3$ with symmetric boundary conditions, implemented in MATLAB using the \texttt{imfilter} function.  We approximate $\zeta=\|D\|$ as $\sqrt{8}$ (see \cite{Chambolle2004}). Additionally, we have $\|K\|=1$, thus, $\mu=1/\|K\|^{2}=1$. To test the inertial parameters, for a given step-size $\gamma$, we select $\alpha$, $\beta$, and $\theta$ to satisfy either \eqref{eq:condIMT1} or \eqref{eq:condDIMT1}, depending on the chosen algorithm. In particular, we consider
$\gamma = 2\mu\kappa/(1+4\mu\zeta)$ for $\kappa\in \left\{0.5, 0.6, 0.7, 0.8\right\}$. For FHRBSI, FHRBI, and FHRBDI, we analyze three distinct cases of $\alpha$ and $\theta$. The specific values of $\alpha$, $\beta$, and $\theta$ are summarized in Table~\ref{Tab:IP}, where we set $\widetilde{\gamma} =\gamma(\zeta+1/(2\mu))$.
{\begin{center}
	\begin{table}\centering\resizebox{12cm}{!} {
	\begin{tabular}{|c|c|c|c|c|c|c|}\cline{1-6}
		Algorithm & Case & $\alpha$ & $\beta$ & $\theta$ &$\lambda$ & \multicolumn{1}{}{} \\ 
        \hline
		 FHRB& 1& 0& 0& 0& 1 & - \\	\hline  
  \multirow{3}{*}{ FHRBSI}& 1& 0&  0& $\theta_1/3$& 1 & \multirow{3}{*}{$\theta_1=\dfrac{0.99}{3}\left(1-\widetilde{\gamma}-\zeta\gamma\right)$}\\	\cline{2-6}
 &2 &0&  0& $2\theta_1/3$& 1 &\\	\cline{2-6}
  &3 & 0&  0& $\theta_1$& 1 &\\	\hline 
    \multirow{3}{*}{ FHRBI}& 1&$\alpha_{1}/3$& 0& 0& 1 &\multirow{3}{*}{$\alpha_1=\dfrac{0.99}{2\widetilde{\gamma}}(2\widetilde{\gamma}-3+\sqrt{(3-2\widetilde{\gamma})^2+4(1-\zeta\gamma-\widetilde{\gamma})\widetilde{\gamma}})$}\\	\cline{2-6}
    & 2 &$2\alpha_{1}/3$&  0& 0& 1 &\\	\cline{2-6}
    &3 &$\alpha_{1}$&  0& 0&1 &\\	\hline 
      \multirow{3}{*}{ FHRBDI}& 1&$\alpha_{2}/3$& 1& 0& 1 & \multirow{3}{*}{$\alpha_2=\dfrac{0.99}{2\zeta\gamma}\left(2\zeta\gamma-3+\sqrt{\left(3-2\zeta\gamma\right)^{2}+4\zeta\gamma\left(1-2\zeta\gamma-\frac{(1-\beta)^{2}\gamma}{2\mu}\right)}\right)$}\\	\cline{2-6}
      &2&$2\alpha_{2}/3$& 1& 0& 1 &\\	\cline{2-6}
      &3&$\alpha_{2}$& 1& 0& 1 &\\
      \hline\
		 FHRBSDI& 1 & 0& 1& $\theta_2$& 1 & $\theta_2 = 0.99\left( 1-\gamma(1-\beta)^2/(2\mu)-2\zeta\gamma\right)/3$ \\
      \hline
	\end{tabular}}
\captionsetup{width=\textwidth}     \caption{Inertial parameters for FRHB,  FHRB semi inertial (FHRBSI) \cite[Theorem 5.1]{Tang2022InFRB}, FHRB inertial (FHRBI), FHRB double inertial (FHRBDI), and FHRB semi double inertial (FHRBSDI).}\label{Tab:IP}
	\end{table}
\end{center}}
{\begin{center}
	\begin{table}\centering\resizebox{12cm}{!} {
	\begin{tabular}{|c|c|c|c|c|c|c|}\cline{1-6}
		Algorithm & Case & $\alpha$ & $\beta$ & $\theta$ &$\lambda$ & \multicolumn{1}{}{} \\ 
        \hline
        \multirow{4}{*}{ FHRBRI}& 1&$3\alpha_{1}/4$& 0& 0& $\lambda_1$ & \multirow{4}{*}{$\lambda_1=\dfrac{0.99}{2\zeta\gamma}\left(\sqrt{\left(1+2\zeta\gamma+\alpha(1+\alpha)\right)^{2}+4\zeta\gamma\left(2+\zeta\gamma-\frac{\gamma}{2\mu}\right)}-1-2\zeta\gamma-\alpha(1+\alpha)\right)$}\\	\cline{2-6}
      &2&$\alpha_{1}/2$& 0& 0& $\lambda_1$ &\\	\cline{2-6}
      &3&$\alpha_{1}/4$& 0& 0& $\lambda_1$ &\\	\cline{2-6}
      &4&$0$& 0& 0& $\lambda_1$ &\\
      \hline
	\end{tabular}}
    \captionsetup{width=\textwidth} \caption{Relaxation and inertial parameters for FHRBRI.}\label{Tab:RP}
	\end{table}
\end{center}}
We compare the aforementioned algorithms, step-sizes, and relaxation parameters across 20 random realizations of $b$. The stopping criterion is based on the relative error, with a tolerance of $10^{-6}$, and a maximum of $10^4$ iterations. Table~\ref{Tab:1} reports the average iteration number (IN) and the average CPU Time (T) in seconds, obtained by applying all the algorithms to
solve the optimization problem in \eqref{Min:image} for the 20 random observations. From the results in Table~\ref{Tab:1}, we observe that larger inertial parameters improve the convergence of the inertial algorithms. Among these, FHRBDI achieves the best performance in terms of the average number of iterations. However, FHRBSDI exhibits the best performance in terms of average CPU time, even though it requires more iterations. This can be attributed to the additional inertial step performed at each iteration by FHRBDI, which increases its computational cost. Furthermore, we note that larger values of $\kappa$, corresponding to larger step-sizes $\gamma$, lead to better results in terms of both iterations and CPU time for all algorithms. This observation aligns with findings reported in \cite{Tang2022InFRB,BotrelaxFBF2023}. 

To test FHRB relaxed inertial (FHRBRI), we consider $\kappa=0.8$ and use the inertial and relaxation parameters specified in Table~\ref{Tab:RP}, where $\alpha_1$ is defined in Table~\ref{Tab:IP}. The results are summarized in Table~\ref{Tab:2}. From these results, we observe that FHRBRI achieves improved convergence compared to FHRB; however, it is outperformed by FHRBDI and FHRBSDI in terms of iteration number and CPU time, respectively. We conclude that to further accelerate the convergence of FHRBRI, prioritizing larger values of $\alpha$ over larger values of $\lambda$ is recommended. 

As mentioned earlier, larger values of $\gamma$ yield better results in terms of iterations and CPU time for all the algorithms. However, as $\kappa$ approaches 1, the parameters $\alpha$, $\beta$, and $\theta$ satisfying \eqref{eq:condDIMT1} converge to 0, causing the inertial effect to diminish. To explore this behavior, we set $\kappa=0.99$ and consider two scenarios for the inertial parameters. First, we test the algorithms using constant step-sizes that violates \eqref{eq:condDIMT1}. The results, presented in Table~\ref{Tab:3}, show that FRHR outperforms all the results reported in Table~\ref{Tab:1}. On the other hand, we observe that including an inertial parameter can accelerate the convergence but may also lead to divergence. In particular, FHRBSI, FHRBI, and FHRBDI exhibit accelerated convergence when $\alpha =0.1$ but diverge when $\alpha =0.25$. Notably, FHRBI and FHRBDI also accelerate for $\alpha =0.2$. Figure~\ref{fig:noteo} illustrates the relative error as a function of the iteration number for the 10th random observation. 

To leverage the acceleration provided by the inertial step even when $\kappa=0.99$, while ensuring convergence, we propose a {\it restart} strategy for the inertial parameter. In particular, we define $\alpha_n = \alpha \in \RPP$ for $n\leq N_0\in \N$ and $\alpha_n=0$ for $n\geq N_0$. This choice of $(\alpha_n)_{n \in \N}$ satisfies the assumptions of Theorem~\ref{teo:main}. Table~\ref{Tab:4} presents the convergence results for FHRB, FHRBSI, FHRBI, FHRBDI, and the restart strategy, called FHRBIR. The inertial parameters for FHRBSI, FHRBI, and FHRBDI were selected based on Table~\ref{Tab:IP}, specifically in their respective case 3. As these parameters are relatively small, Table~\ref{Tab:4} shows that the acceleration effect is negligible in these cases. On the other hand, the restart strategy improves convergence when $\alpha=0.1$ and $\alpha =0.2$ but slows down when $\alpha =0.25$. Figure~\ref{fig:restart1} illustrates the relative error as a function of the iteration number for the 10th random observation. Furthermore, Figure~\ref{fig:recoveredimages1} displays the original, blurred and noisy, and recovered images for this observation.

To conclude this section, we modify the scenario by comparing FHRB with FHRBIR in cases where $K$ is generated using a blur of kernel of size $9\times 9$ for $N=256$ and $N=512$. Additionally, we evaluate FHRBIR with restart points at 1000, 2000, and 3000 iterations. The results are summarized in Table~\ref{Tab:5}.  From these results, we observe that the acceleration effect  is more pronounced in this scenario due to the increased computational cost per iteration. Among the restart strategies tested, the one with a restart at 3000 iterations achieves the best performance. The relative error as a function of the iteration number for the 10th random observation is shown in Figure~\ref{fig:restart2} and Figure~\ref{fig:restart3}. Additionally, the original, blurred and noisy, and recovered images for this observation are presented in Figure~\ref{fig:recoveredimages2} and Figure~\ref{fig:recoveredimages3}.
 {\begin{center}
	\begin{table}\centering\resizebox{12cm}{!}{
	\begin{tabular}{|c|c|c|c|c|c|c|c|c|c|}
		\cline{3-10}
\multicolumn{2}{}{ }& \multicolumn{2}{|c|}{$\kappa = 0.5$}& \multicolumn{2}{|c|}{$\kappa = 0.6$}& \multicolumn{2}{|c|}{$\kappa = 0.7$}& \multicolumn{2}{|c|}{$\kappa = 0.8$}\\ \hline
		Algorithm & Case &IN &T& IN  & T & IN  &T&IN  &T\\ 
        \hline
		\hline
		 FHRB&  1& 1762&10.73&1588&9.51&1451&8.75&1343&8.12\\	\hline \hline 
  \multirow{3}{*}{ FHRBSI}&1&1704&10.71&1545&9.65&1421&8.88&1326&8.25\\	\cline{2-10}
  &2&1647&10.30&1501&9.33&1392&8.63&1309&8.12\\	\cline{2-10}
  &3& 1588&9.87&1457&9.08&1362&8.46&1293&7.99\\	\hline \hline
    \multirow{3}{*}{ FHRBI}&1& 1694&10.70&1535&9.67&1412&8.85&1320&8.37\\	\cline{2-10}
    &2& 1624&10.26&1480&9.32&1374&8.71&1296&8.17\\	\cline{2-10}    &3&1555&\bf{9.82}&1424&8.94&1336&8.44&1273&8.03\\	\hline \hline
      \multirow{3}{*}{ FHRBDI}&1&1563&10.26&1430&9.28&1337&8.72&1269&8.32\\	\cline{2-10}
      &2& 1558&10.25&1425&9.26&1333&8.67&1266&8.29\\	\cline{2-10}
      &3& \bf{1546}&10.03&\bf{1411}&9.19&\bf{1321}&8.61&\bf{1256}&8.18\\
      \hline\hline
		 FHRBSDI& 1& 1573&9.87&1441&\bf{8.88}&1346&\bf{8.37}&1276&\bf{7.89}\\
        \hline
	\end{tabular}}\caption{Numerical results for FHRB, FHRBSI, FHRBDI, FHRBDI, and FHRBSDI for $\kappa \in \{0.5, 0.6, 0.7, 0.8\}$}\label{Tab:1}
	\end{table}
\end{center}}
	\begin{table}
\begin{minipage}[t]{4.5cm}
	\resizebox{4.5cm}{!}{\begin{tabular}{|c|c|c|c|}
	\cline{3-4}
	\multicolumn{2}{}{ }& \multicolumn{2}{|c|}{$\kappa = 0.8$}\\ \hline
	Algorithm & Case & IN &T\\ 
	\hline
	\hline
	\multirow{4}{*}{ FHRBRI}&1&1277&8.38\\	\cline{2-4}
	&2&1281&8.40\\	\cline{2-4}
	&3& 1286&8.38\\	\cline{2-4}
	&4& 1291&7.96\\	\hline 
	\end{tabular}}
\captionsetup{width=\textwidth} \caption{Numerical results for FHRBRI for $\kappa =0.8$.} \label{Tab:2}
\end{minipage}\hspace{0.5cm}
\begin{minipage}[c]{7cm}
		\resizebox{7cm}{!}{\begin{tabular}{|c|c|c|c|c|c|c|}
			\cline{6-7}
			\multicolumn{5}{}{ }& \multicolumn{2}{|c|}{$\kappa = 0.99$}\\ \hline
			Algorithm & $\alpha$ & $\beta$ & $\theta$ & $\lambda$ & IN &T\\ 
			\hline
			\hline
			 FHRB&  0& 0&0&1&1194&7.32\\	\hline \hline 
			\multirow{3}{*}{ FHRBSI}&$0$& 0&0.1&1& 1123& 7.22\\	\cline{2-7}&$0$& 0&0.2&1& --& --\\	\cline{2-7}
			&$0$& 0&0.25&1& --& --\\	\hline \hline
			\multirow{3}{*}{ FHRBI}&$0.1$& $0.1$&0&1& 1124& 7.26\\	\cline{2-7}
			&$0.2$& $0.2$&0&1&1051&6.73\\	\cline{2-7}
			&$0.25$& $0.25$&0&1&--&--\\	\hline \hline
			\multirow{3}{*}{ FHRBDI}&$0.1$& 1&0&1&1123&7.48\\	\cline{2-7}
			&$0.2$& 1&0&1&1050&6.98\\	\cline{2-7}
			&$0.25$& 1&0&1&--&--\\
			\hline
	\end{tabular}}
\captionsetup{width=\textwidth} \caption{Numerical results for FHRB, FHRBSI, FHRBI, FHRBDI, for $\kappa=0.99$ and   inertial parameters that do not satisfy the hypothesis guaranteeing convergence.}\label{Tab:3}
\end{minipage}
	\end{table}
{\begin{center}
	\begin{table}\centering\resizebox{10cm}{!} {
	\begin{tabular}{|c|c|c|c|c|c|c|c|}
		\cline{7-8}
\multicolumn{6}{}{ }& \multicolumn{2}{|c|}{$\kappa = 0.99$}\\ \hline
		Algorithm & $\alpha$ & $\beta$ & $\theta$ & $\lambda$ & $N_0$ & IN &T\\ 
        \hline
		\hline
		 FHRB&  0& 0&0&1&-&1194&7.16\\	\hline \hline 
  FHRBSI&$0$& 0& $\theta_1$ &1& -& 1192& 7.37\\	\hline \hline 
    FHRBI & $\alpha_1$& 0 &0&1&-& 1190& 7.46\\\hline \hline
      FHRBDI&$\alpha_2$& $0.05$ &0&1&-&1188&7.54\\
        \hline\hline
    \multirow{3}{*}{ FHRBIR}&$0.1$& $0.1$&0&1&1000& 1085  & 6.78  \\	\cline{2-8}
    &$0.2$& $0.2$&0&1&1000& {\bf 998} & {\bf 6.26} \\	\cline{2-8}
    &$0.25$& $0.25$&0&1&1000&1689&10.34\\	\hline
	\end{tabular}}\caption{Numerical results for FHRB, FHRBSI, FHRBI, FHRBDI, and FHRBIR, for $\kappa=0.99$.}\label{Tab:4}
	\end{table}
\end{center}}
{
\begin{center}
	\begin{table}\centering \resizebox{10cm}{!}{
	\begin{tabular}{|c|c|c|c|c|c|c|c|c|c|}
		\cline{7-10}
\multicolumn{6}{}{ }& \multicolumn{4}{|c|}{$\kappa = 0.99$}\\ 
\cline{7-10}
\multicolumn{6}{}{ }& \multicolumn{2}{|c|}{$N=256$}& \multicolumn{2}{|c|}{$N = 512$}\\\hline
		Algorithm & $\alpha$ & $\beta$ & $\theta$ & $\lambda$ &$N_0$& IN &T & IN &T\\ 
        \hline
		\hline
		 FHRB&  0& 0&0&1& - &3593&24.13 &  4477&188.10\\	\hline \hline 
    \multirow{3}{*}{ FHRBIR}&$0.2$& $0.2$&0&1&1000& 3349  & 22.82 & 4230 & 178.86\\	\cline{2-10}
    &$0.2$& $0.2$&0&1&2000& 3108 & 21.47 & 3983 & 169.40 \\	\cline{2-10}
    &$0.2$& $0.2$&0&1&3000&{\bf 3015} & {\bf 21.13} & \bf{3738} & \bf{160.23}\\	\hline
	\end{tabular}}\caption{ Numerical results for FHRB and FHRBIR, for $\kappa=0.99$, blur of kernel $9\times 9$, and $N=256$ and $N=512$,}\label{Tab:5}
	\end{table}
\end{center}}

\begin{figure}
\centering
\subfloat[\scriptsize $\alpha=0.1$]{\label{fig:noteo01}\includegraphics[scale=0.26]{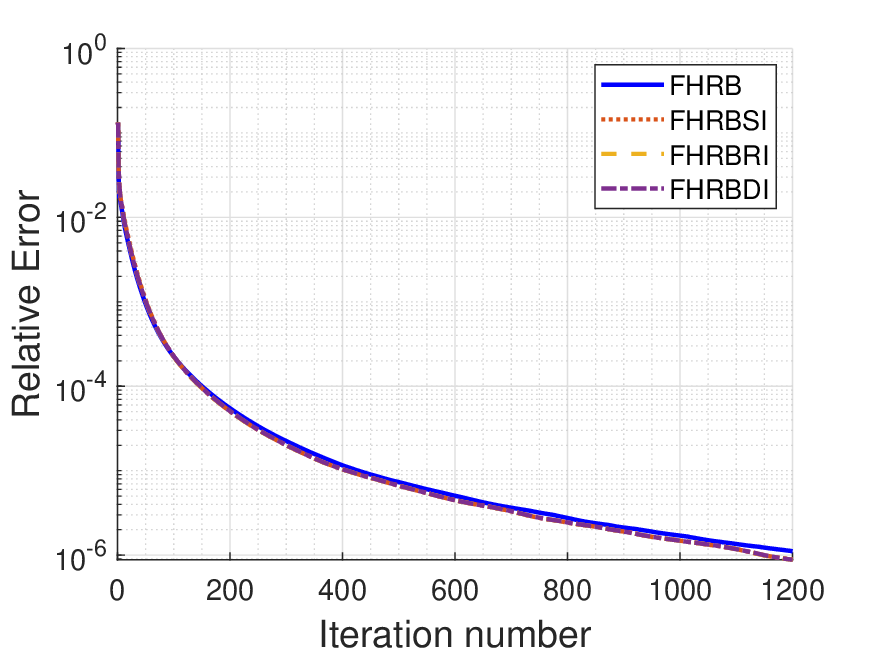}}\,
\subfloat[\scriptsize $\alpha=0.2$]{\label{fig:noteo02}\includegraphics[scale=0.26]{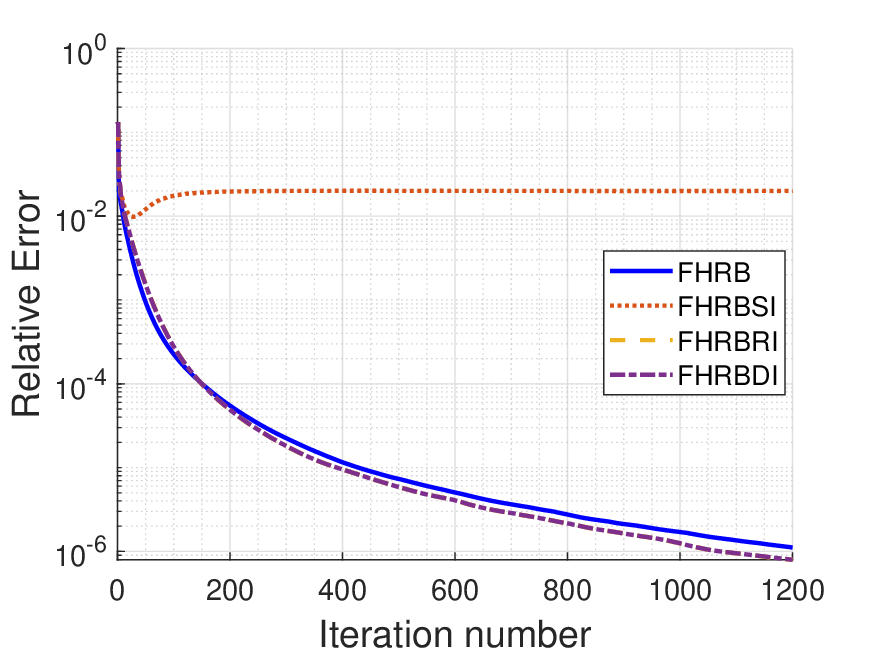}}
\subfloat[\scriptsize $\alpha=0.25$]{\label{fig:noteo03}\includegraphics[scale=0.26]{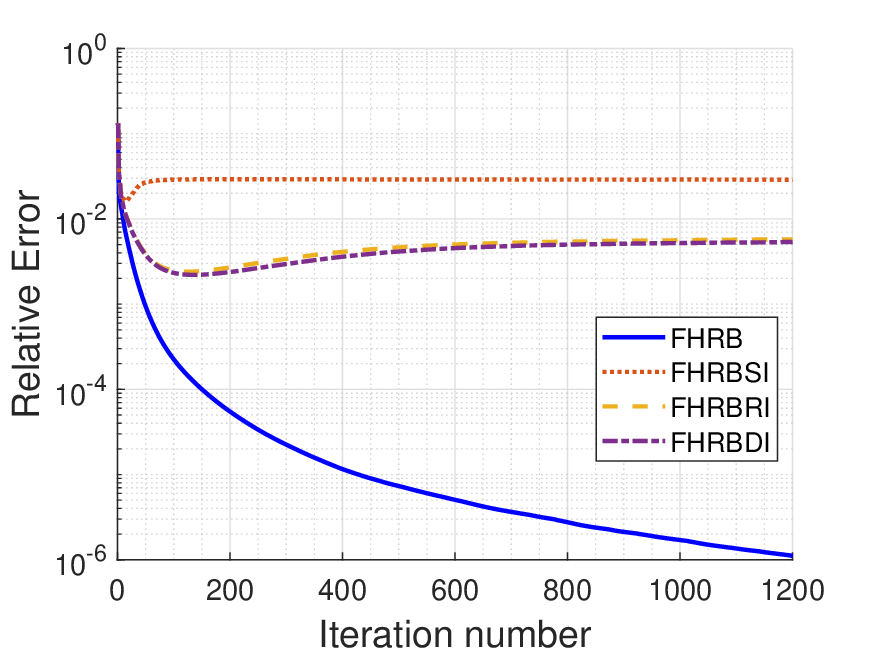}}
\captionsetup{width=\textwidth} \caption{Relative error along iteration number for the random observation 10. In this case, the inertial parameters do not satisfy the hypothesis guaranteeing convergence. See Table~\ref{Tab:3} for details on the parameters.} 
\label{fig:noteo}
\end{figure}

\begin{figure}
\centering
\subfloat[\scriptsize Blur $3\times 3$, $N=256$]{\label{fig:restart1}\includegraphics[scale=0.26]{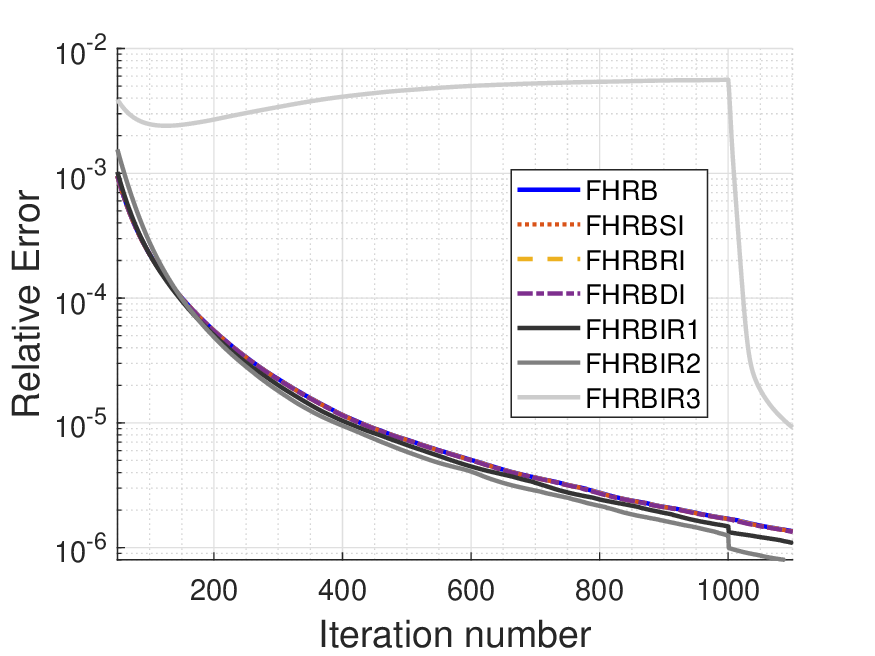}}\,
\subfloat[\scriptsize Blur $9\times 9$, $N=256$ ]{\label{fig:restart2}\includegraphics[scale=0.26]{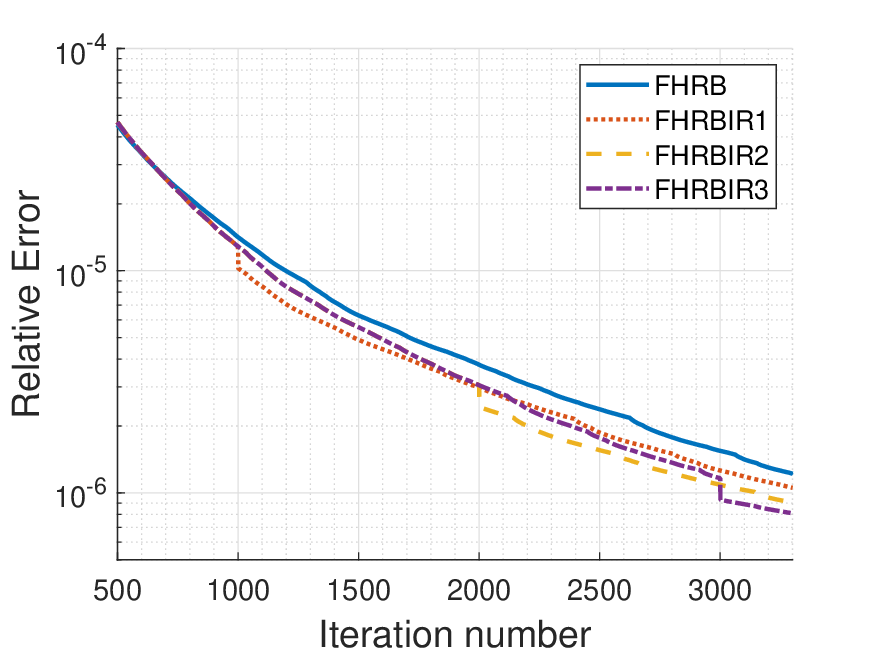}}
\subfloat[\scriptsize Blur $9\times 9$,$N=512$]{\label{fig:restart3}\includegraphics[scale=0.26]{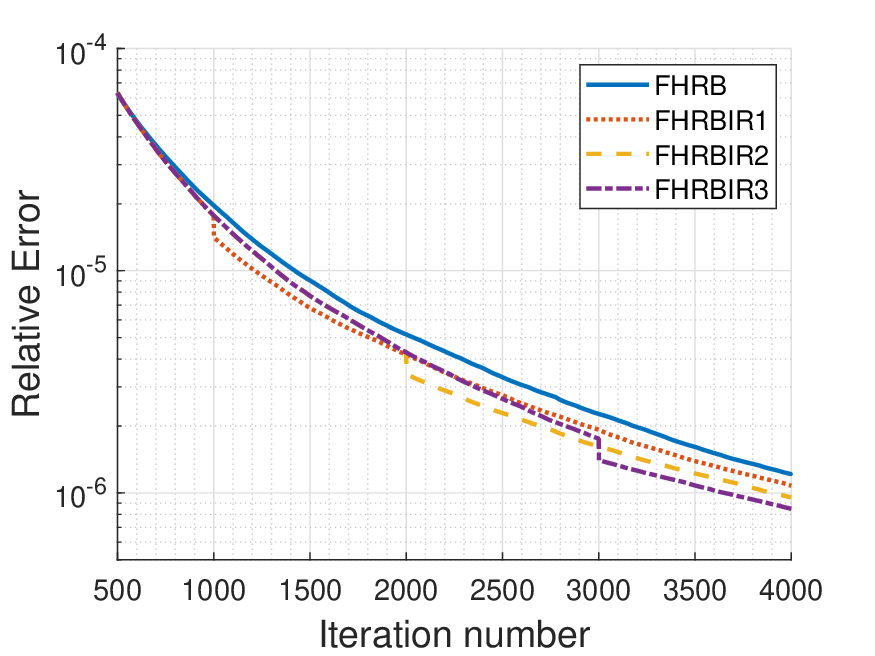}}
\captionsetup{width=\textwidth} \caption{Relative error along iteration number for the random observation 10. See Table~\ref{Tab:4} for details on the parameters.} 
\label{fig:restart}
\end{figure}

\begin{figure}
\centering
\subfloat[\scriptsize $x^*$]{\label{fig:x256}\includegraphics[scale=0.3]{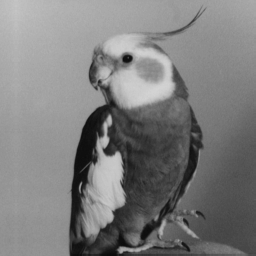}}\,
\subfloat[\scriptsize $b_{10}$ (27.54)]{\label{fig:xb256}\includegraphics[scale=0.3]{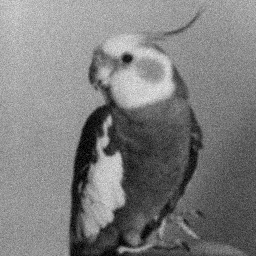}}\,
\subfloat[\scriptsize FHRB (33.42)]{\label{fig:xFRHF256}\includegraphics[scale=0.3]{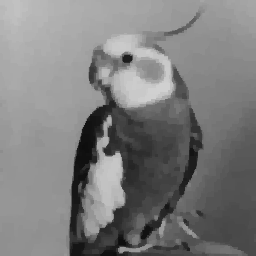}}\,
\subfloat[\scriptsize FHRBSI (33.42)]{\label{fig:xFRHFS256}\includegraphics[scale=0.3]{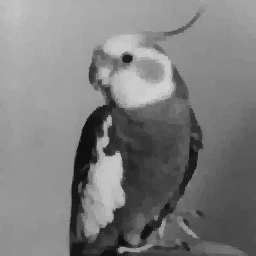}}\\
\subfloat[\scriptsize FHRBRI (33.42)]{\label{fig:xFRHFI256}\includegraphics[scale=0.3]{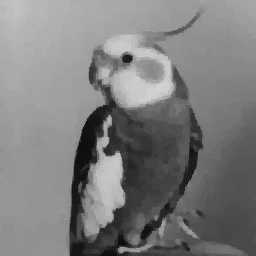}}\,
\subfloat[\scriptsize FHRBDI (33.42)  ]{\label{fig:xFRHFDI256}\includegraphics[scale=0.3]{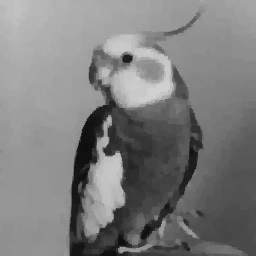}}\,
\subfloat[\scriptsize FHRBIR (33.42)]{\label{fig:xFRHFR256}\includegraphics[scale=0.3]{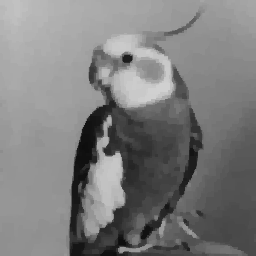}}
\captionsetup{width=\textwidth} \caption{Original image, blur and noisy observation 10, and recovered images for  FHRB,  FHRBSI, FHRBI, FHRBDI, and FHRBIR ($\alpha = 0.2$) with their respective PNSR (dB), blur of kernel $3\times 3$ and $N=256$. See Table~\ref{Tab:4} for details on the parameters.} 
\label{fig:recoveredimages1}
\end{figure}

\begin{figure}
\centering
\subfloat[\scriptsize $x^*$]{\label{fig:x256b9}\includegraphics[scale=0.3]{x_original.png}}\,
\subfloat[\scriptsize $b_10$ (24.84)]{\label{fig:xb256b9}\includegraphics[scale=0.3]{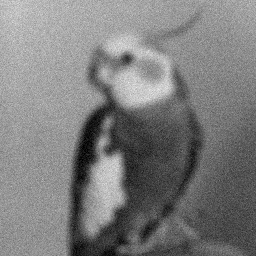}}\,
\subfloat[\scriptsize FHRB (28.71)]{\label{fig:xFRHF256b9}\includegraphics[scale=0.3]{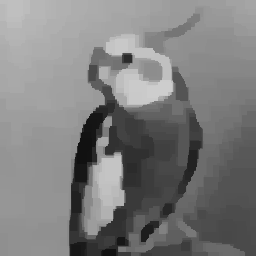}}\,
\subfloat[\scriptsize FHRBR (28.70)]{\label{fig:xFRHFR256b9}\includegraphics[scale=0.3]{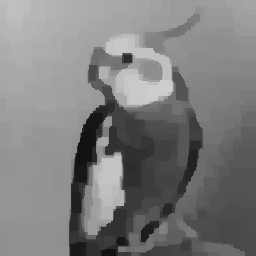}}
\captionsetup{width=\textwidth} \caption{Original image, blur and noisy observation 10, and recovered images for  FHRB and FHRBIR ($N_0=3000$) with their respective PNSR (dB), blur of kernel $9\times 9$ and $N=256$. See Table~\ref{Tab:5} for details on the parameters.} 
\label{fig:recoveredimages2}
\end{figure}

\begin{figure}
\centering
\subfloat[\scriptsize $x^*$]{\label{fig:x512}\includegraphics[scale=0.155]{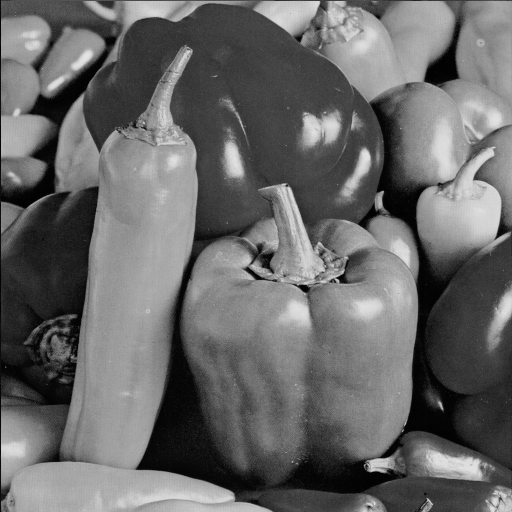}}\,
\subfloat[\scriptsize $b_10$ (23.68)]{\label{fig:xb512b9}\includegraphics[scale=0.155]{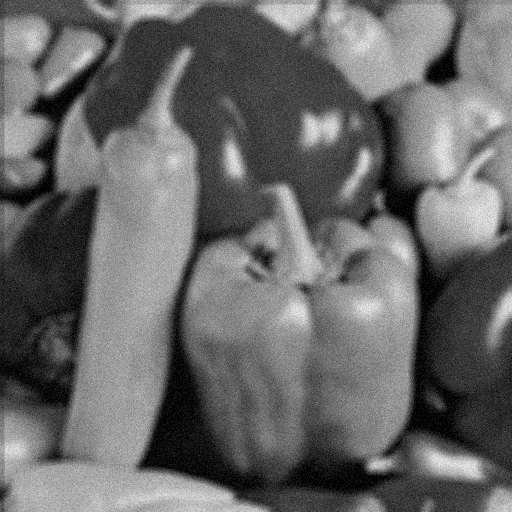}}\,
\subfloat[\scriptsize FHRB (26.41)]{\label{fig:xFRHF512b9}\includegraphics[scale=0.155]{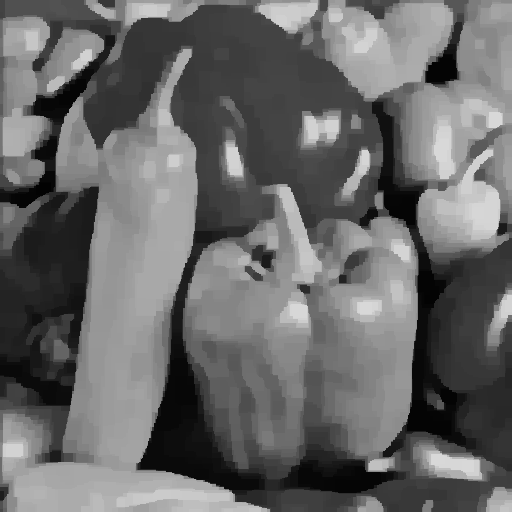}}\,
\subfloat[\scriptsize FHRBR (26.41)]{\label{fig:xFRHFR512b9}\includegraphics[scale=0.155]{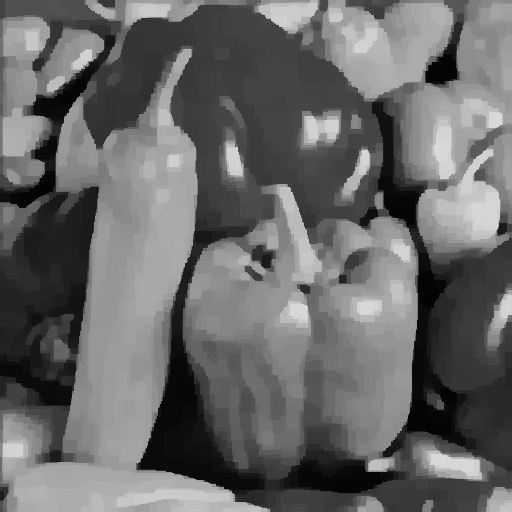}}
\captionsetup{width=\textwidth} \caption{Original image, blur and noisy observation 10, and recovered images for  FHRB and FHRBIR ($N_0=3000$) with their respective PNSR (dB), blur of kernel $9\times 9$ and $N=512$. See Table~\ref{Tab:5} for details on the parameters.} 
\label{fig:recoveredimages3}
\end{figure}
 \section{Conclusions}\label{se:conc}
In this article, we propose several inertial/relaxed versions of NFBM, extending and recovering the classic convergence result for NFBM. Moreover,  by a specific choice of monotone operators and metrics in the inertial/relaxed version of NFBM, we recover and extend inertial and relaxed versions of FB, FHRB, CP, CV, among others. We compare the FHRB with its momentum, inertial, relaxed, and double-inertial versions in image restoration. For a fixed step-size, all the inertial/relaxed versions improve the convergence, with the double-inertial version exhibiting the best performance in terms of the number of iterations. However, since the inertial parameters converge to zero as the step-size approaches its admissible limit, the acceleration becomes negligible. To leverage the acceleration provided by the inertial step, we propose a restart strategy for the inertial parameter. Numerical experiments illustrate that this strategy improves convergence, although there is no theoretical framework to implement it to accelerate the process. These results motivate further investigation into incorporating larger step-sizes that do not limit the inertial parameter.\\
{\small 
{\bf Conflict of interest}: The authors declare that they have no conflicts of interest that are relevant to the content of this article.}

\end{document}